\documentclass{amsart}
\usepackage[T1]{fontenc}
\usepackage{lmodern}
\usepackage{amssymb,amsmath}
\usepackage{ifxetex,ifluatex}
\usepackage{fixltx2e} 
\IfFileExists{upquote.sty}{\usepackage{upquote}}{}
\ifnum 0\ifxetex 1\fi\ifluatex 1\fi=0 
  \usepackage[utf8]{inputenc}
\else 
  \ifxetex
    \usepackage{mathspec}
    \usepackage{xltxtra,xunicode}
  \else
    \usepackage{fontspec}
  \fi
  \defaultfontfeatures{Mapping=tex-text,Scale=MatchLowercase}
  
\fi
\IfFileExists{microtype.sty}{\usepackage{microtype}}{}
\ifxetex
  \usepackage[setpagesize=false, 
              unicode=false, 
              xetex]{hyperref}
\else
  \usepackage[unicode=true]{hyperref}
\fi
\hypersetup{breaklinks=true,
            bookmarks=true,
            pdfauthor={},
            pdftitle={},
            colorlinks=true,
            citecolor=blue,
            urlcolor=blue,
            linkcolor=magenta,
            pdfborder={0 0 0}}
\urlstyle{same}  
\setlength{\parindent}{0pt}
\setlength{\parskip}{6pt plus 2pt minus 1pt}
\setlength{\emergencystretch}{3em}  
\setcounter{secnumdepth}{5}
\usepackage{amsfonts}
\usepackage{amsmath}
\usepackage{amssymb}
\usepackage{graphicx}

\usepackage{version}%
\setcounter{MaxMatrixCols}{30}

\usepackage{color}
\usepackage{amscd}     
\usepackage{mathrsfs}  

\newtheorem{theorem}{Theorem}[section]
\theoremstyle{plain}

\newtheorem{corollary}{Corollary}[section]

\newtheorem{definition}{Definition}[section]
\newtheorem{example}{Example}[section]

\newtheorem{proposition}{Proposition}[section]
\newtheorem{remark}{Remark}[section]

\numberwithin{equation}{section}
\excludeversion{comment1}

\newcommand{\st}{\,|\,}

\newcommand{\N}{\mathbb{N}}

\newcommand{\R}{\mathbb{R}}

\newcommand{\cF}{\mathcal{F}}

\newcommand{\cS}{\mathcal{S}}

\newcommand{\Int}{\mathop{\mathrm{int}}}

\newcommand{\be}{\begin{equation}}
\newcommand{\ee}{\end{equation}}

\newcommand{\wt}[1]{{\widetilde{#1}}}

\newcommand{\bP}{{\mathbb{P}}}

\newcommand{\loc}{\mathrm{loc}}
\newcommand{\mydot}{\;\cdot\;}
\DeclareMathOperator{\dom}{dom}
\DeclareMathOperator{\range}{range}
\DeclareMathOperator{\id}{id}
\DeclareMathOperator{\Aff}{Aff}
\DeclareMathOperator{\GL}{GL}
\DeclareMathOperator{\diag}{diag}


\title{Self-referential Functions}
\author{Michael Barnsley \and Markus Hegland \and Peter Massopust}
\date{}

\begin{document}

\begin{abstract}
We introduce the concept of fractels for functions and discuss their analytic and algebraic properties. We also consider the representation of polynomials and analytic functions using fractels, and the consequences of these representations in numerical analysis.
\vskip 6pt
\noindent\textbf{Keywords and Phrases:} Iterated function system (IFS), attractor, fractal function, Read-Bajraktarevi\'c operator, fractel, self-referential function
\vskip 6pt\noindent
\textbf{AMS Subject Classification (2010):} 28A80, 33F05, 41A05, 65D05
\end{abstract}

\maketitle

\section{Introduction}

Contractive operators on function spaces play an important role in the theory of differential and integral equations and are fundamental for the development of iterative solvers. One class of contractive operators is defined on the graphs of functions using a special type of iterated function system (IFS). The fixed point 
of such an IFS is the graph of a fractal function. There is a vast literature on IFSs and fractal functions including the books by the first and third author \cite{B,massopust1,massopust}.

In computer graphics, IFSs are employed in refinement methods which effectively compute points on curves and surfaces~\cite{CavDM91}. They are also used to compute function values of piecewise polynomial functions and wavelets. The graphs of piecewise polynomial functions can be written as the fixed points of (local) IFSs and are thus invariant under the semigroup generated by the maps in the IFS. This invariance induces the well-known self-referentiality of fractal sets. 

In this paper, we introduce and investigate the building blocks of this self-referentiality. These building blocks, called {\em fractels}, play a similar role in an IFS as do finite elements in polynomial approximations of differential equations. It is shown that fractels define IFSs in a natural way and allow the description of vector spaces of functions $[0,1]\to\R$.

The structure of this paper is as follows. In Section 2 we give a brief review of IFSs and their attractors, and define the Read-Bajraktarevi{\'c} operator associated with a particular class of iterated function systems whose attractors are the graphs of functions. Fractels and the concept of self-referentiality are defined in Section 3, where we also collect some algebraic and analytic properties of fractels. The next section focuses on the construction of IFSs from fractels. Affine fractels are then used in Section 5 to describe vector spaces of functions $[0,1]\to\R$ and polynomial bases. In addition, it is shown that they provide efficient algorithms for the evaluation of polynomials overcoming some of the disadvantages of, for instance, Horner's rule.

\section{Brief Review of IFSs and their attractors}

We define an iterated function system (IFS), an attractor of an IFS, and the
basin of an attractor. More information can be found in \cite{BV8}.

\begin{definition}\label{def1}
An \emph{iterated function system (IFS)} is a topological space $X$
together with a finite set of continuous functions $f_{n}:X\rightarrow X$, $n=1,2,\dots,N$.
\end{definition}

We write
\[
\mathcal{F}=\mathcal{F}{(X)}= \{X; f_{1},f_{2},\ldots, f_{N}\}
\]
to denote an IFS. Throughout, $N$ is a finite positive integer and
$X$ is a complete metric space. We use the same symbol, $\mathcal{F}$,
for the IFS and for the set of functions in the IFS.

Let $H=H(X)$ be the collection of nonempty compact subsets of $X$ and
define $\cF:H\rightarrow H$ by
\be\label{F}
\cF(S)=\bigcup\limits_{f\in\mathcal{F}}f(S)
\ee
for all $S\in H$. 

By a slight abuse of notation, we use $\cF$ as the symbol for an IFS and its associated operator. 
We also treat $\cF$ as a map $\cF:2^{X}\rightarrow 2^{X}$, where $2^{X}$ is the collection of all subsets of
$X$. For $S\subset X$, define $\cF^{0}(S)=S$ and let $\cF^{k}(S)$ denote the
$k$-fold composition of $\cF$ applied to $S$, namely, the union of $f_{n_{1}%
}\circ f_{n_{2}}\circ\cdots\circ f_{n_{k}}(S)$ over all finite words
$n_{1}n_{2}\cdots n_{k}$ of length $k$.

Let $d:=d_{X}$ be the metric on $X$, and let $d_{H}%
:=d_{H(X)}$ be the corresponding Hausdorff metric. The Hausdorff
metric on $H(X)$ is defined by%
\[
d_{H}(S,T)=\min\{\delta\geq0 \st S\subset B(T,\delta),\ T\subset B(S,\delta)\}
\]
for all $S,T\in H$, where
\[
B(S,\delta)=\{x\in X \st d_{X}(x,s)\leq\delta,\text{ for some }s\in S\}.
\]
Throughout, the topology on $H$ is the one induced by $d_{H}$. Key facts,
proved in \cite{henrikson}, for example, are that $(H,d_{H})$ is a complete
metric space because $(X,d)$ is complete, and that if $(X,d)$ is
compact then $(H,d_{H})$ is compact.

\begin{definition}
\label{def:attractor} An \emph{attractor} of the IFS $\mathcal{F}$ is a set
$A\in H(X)$ such that

\begin{enumerate}
\item $\cF(A)=A$, and

\item there is an open set $U\subset X$ such that $A\subset U$ and
$\lim_{k\rightarrow\infty}\cF^{k}(S)=A,$ for all $S\in H$ with $S\subset U$,
where the limit is with respect to the Hausdorff metric on $H$.
\end{enumerate}
\end{definition}

The union of all open sets $U$, such that Statement 2 of Definition
\ref{def:attractor} is true, is called the \emph{basin of the attractor }$A$
(with respect to $\mathcal{F}$). If $B$ denotes the basin of $A$, then (it can
be proved that) Statement 2 of Definition \ref{def:attractor} holds with $U$
replaced by $B$. That is, the basin of the attractor $A$ is the largest open
set $U$ such that Statement 2 of Definition \ref{def:attractor} holds.

In much of the discussion in this paper, $\mathcal{F}({X)}$ has a
unique attractor $A,$ and the basin of $A$ is $X$.
\subsection{IFSs and the Read-Bajraktarevi\' c operator}\label{IFSRB}
Suppose we are given a finite family $\{l_n : X\to X \mid n = 1, \ldots, N\}$ of injective contractions with the following two properties: 
\begin{align}
&X = \bigcup_{n=1}^N l_n(X);\label{union}\\
&\Int (l_m(X))\cap \Int(l_n(X)) = \emptyset, \quad\forall\;m, n\in \{1,\ldots, N\}, m\neq n.\label{partition} 
\end{align}
Here, $\Int (S)$ denotes the interior of a set $S$.

Suppose that $(Y,d_Y)$ is a complete metric space with metric $d_Y$. A mapping $g:X\to Y$ is called \emph{bounded} (with respect to the metric $d_Y$) if there exists an $M> 0$ so that for all $x_1, x_2\in X$, $d_Y(g(x_1),g(x_2)) < M$.

Recall that the set $B(X, Y) := \{g : X\to Y \mid \text{$g$ is bounded}\}$ when endowed with the metric 
\[
d(g,h): = \displaystyle{\sup_{x\in X}} \,d_Y(g(x), h(x))
\] 
becomes a complete metric space.

\begin{remark}
Under the usual addition and scalar multiplication of functions, the space $B(X,Y)$ becomes a metric linear space, i.e., a vector space under which the operations of vector addition and scalar multiplication are continuous.
\end{remark}

Let $F_n: X\times Y \to Y$, $n=1, \ldots, N$, be mappings that are uniformly contractive in the second variable, i.e., there exists a $c\in [0,1)$ so that for all $y_1, y_2\in Y$
\be\label{scon}
d_Y (F_n(x, y_1), F_n(x, y_2)) \leq c\, d_Y (y_1, y_2), \quad\forall x\in X,\,\forall n = 1, \ldots, N.
\ee
Define a \emph{Read-Bajractarevi\'c (RB)} $\Phi: B(X,Y)\to B(X,Y)$ by
\be\label{RB}
\Phi g (x) := \sum\limits_{n=1}^N F_n (l_n^{-1} (x), g\circ l_n^{-1} (x))\,\chi_{l_n(X)}(x), 
\ee
where $\chi_M$ denotes the characteristic function of a set $M$. Equivalently, \eqref{RB} can also be written in the form
\be\label{3.3}
(\Phi g \circ l_n) (x) := F_n (x, g(x)),\quad x\in X, \;n = 1, \ldots, N. 
\ee
The operator $\Phi$ is well-defined and since $g$ is bounded and each $v_i$ contractive in the second variable, $\Phi g\in B(X,Y)$.

Moreover, \eqref{scon} implies that $\Phi$ is contractive on $B(X, Y)$:
\begin{align}\label{estim}
d(\Phi g, \Phi h) & = \sup_{x\in X} d_Y (\Phi g (x), \Phi h (x))\nonumber\\
& = \sup_{x\in X} d_Y (F(l_n^{-1} (x), g(l_n^{-1} (x))), F(l_n^{-1} (x), h(l_n^{-1} (x))))\nonumber\\
& \leq c\sup_{x\in X} d_Y (g\circ l_n^{-1} (x), h \circ l_n^{-1} (x)) \leq c\, d_Y(g,h).
\end{align}
We set $F(x,y):= \sum\limits_{n=1}^N F_n (x, y)\,\chi_{X}(x)$ in the above equation. By the Banach Fixed Point Theorem, $\Phi$ has therefore a unique fixed point $f$ in $B(X,Y)$. This unique fixed point is called the \emph{bounded fractal function} (generated by $\Phi$) and it satisfies the \emph{self-referential equation}
\be\label{eq2.8}
f(x) = \sum\limits_{n=1}^N F_n (l_n^{-1} (x), f\circ l_n^{-1} (x))\,\chi_{l_n(X)}(x),
\ee
or, equivalently,
\be
f\circ l_n (x) = F_n (x, f(x)),\quad x\in X, \;n = 1, \ldots, N.
\ee

In case, $Y$ is an \emph{${F}$-space}, i.e., a topological vector space whose topology is induced by a complete translation-invariant metric $d$, and if in addition this metric is also homogeneous, then a special class of mappings $F_n$ may be considered in the definition of $\Phi$. These special mappings are given by
%
\be\label{specialv}
F_n (x,y) := \lambda_n (x) + S_n (x) \,y,\quad n = 1, \ldots, N,
\ee
where $\lambda_n \in B(X,Y)$ and $S_n : X\to \R$ is a function.

As the metric $d_Y$ is homogeneous, the mappings \eqref{specialv} satisfy condition \eqref{scon} provided that the functions $S_i$ are bounded on $X$ with bounds in $[0,1)$. For then
\begin{align*}
d_Y (\lambda_n (x) + S_n (x) \,y_1,&\lambda_n (x) + S_n (x) \,y_2) = d_Y(S_n (x) \,y_1,S_n (x) \,y_2) \\
& = |S_n(x)| d_Y (y_1, y_2) \leq \|S_n\|_{\infty}\, d_Y (y_1, y_2) \leq s\,d_Y (y_1, y_2).
\end{align*}
Here, $\|\cdot\|_{\infty}$ denotes the supremum norm on $\R$ and $s := \max\{\|S_n\|_{\infty}\st$ $n = 1, \ldots, N\}$.

Next, we exhibit the relation between the graph $G(f)$ of the fixed point $f$ of the operator $\Phi$ given by \eqref{RB} and the attractor of an associated contractive IFS. Consider the complete metric space $X\times Y$ and define mappings $w_n:X\times Y\to X\times Y$ by
\be\label{wn}
w_n (x, y) := (l_n (x), F_n (x,y)), \quad n = 1, \ldots, N.
\ee
Assume that the mappings $F_n$ in addition to being uniformly contractive in the second variable are also uniformly Lipschitz continuous in the first variable, i.e., that there exists a constant $L > 0$ so that for all $y\in Y$,
\[
d_Y(F_n(x_1, y),F_n(x_2, y)) \leq L \, d_X (x_1,x_2), \quad\forall x_1, x_2\in X,\quad\forall n = 1, \ldots, N.
\]
Denote by $a:= \max\{a_n\st n = 1, \ldots, N\}$ the largest of the contractivity constants of the $l_n$ and let $\theta := \frac{1-a}{2L}$. Then the mapping $d_\theta : (X\times Y)\times (X\times Y) \to \R$ given by
\[
d_\theta := d_X + \theta\,d_Y
\]
is a metric on $X\times Y$ compatible with the product topology on $X\times Y$.

\begin{theorem}
The family $\cF_\Phi := \{X\times Y; w_1, w_2, \ldots, w_N\}$ is a contractive IFS in the metric $d_\theta$ and the graph $G(f)$ of the fractal function $f$ generated by the RB operator $\Phi$ given by \eqref{RB} is the unique attractor of $\cF_\Phi$. Moreover, 
\be\label{GW}
G(\Phi g) = \cF_\Phi (G(g)),\quad\forall\,g\in B(X,Y),
\ee
where $\cF$ denotes the set-valued operator \eqref{F}.
\end{theorem}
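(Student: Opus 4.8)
The plan is to prove the theorem in three stages: first establish that $\cF_\Phi$ is contractive in the metric $d_\theta$, then verify the graph identity \eqref{GW}, and finally deduce that $G(f)$ is the unique attractor.

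First I would show each $w_n$ is a contraction on $(X\times Y, d_\theta)$. Given two points $(x_1,y_1)$ and $(x_2,y_2)$, I would compute
\[
d_\theta(w_n(x_1,y_1), w_n(x_2,y_2)) = d_X(l_n(x_1), l_n(x_2)) + \theta\, d_Y(F_n(x_1,y_1), F_n(x_2,y_2)).
\]
For the first term I would use that $l_n$ is a contraction with constant $a_n \le a$. For the second term I would split via the triangle inequality into a first-variable piece and a second-variable piece, applying the Lipschitz hypothesis $d_Y(F_n(x_1,y_1),F_n(x_2,y_1)) \le L\, d_X(x_1,x_2)$ and the uniform contractivity \eqref{scon} $d_Y(F_n(x_2,y_1),F_n(x_2,y_2)) \le c\, d_Y(y_1,y_2)$. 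Collecting terms and substituting $\theta = \tfrac{1-a}{2L}$, the $d_X$ coefficient becomes $a + \theta L = \tfrac{1+a}{2} < 1$, while the $d_Y$ coefficient becomes $\theta c \le \theta < \tfrac{1-a}{2L}$; I would check that the overall contractivity factor can be taken as $\max\{\tfrac{1+a}{2}, c\} < 1$, so $\cF_\Phi$ is contractive and hence possesses a unique attractor by the standard IFS theory reviewed in Section 2.

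Next I would prove the graph identity \eqref{GW}, which is the conceptual heart of the argument. Writing $G(g) = \{(x, g(x)) : x\in X\}$, I would compute the set-valued action directly:
\[
\cF_\Phi(G(g)) = \bigcup_{n=1}^N w_n(G(g)) = \bigcup_{n=1}^N \{(l_n(x), F_n(x, g(x))) : x\in X\}.
\]
Reparametrizing each piece by $u = l_n(x)$, so that $x = l_n^{-1}(u)$ ranges over $l_n(X)$, the $n$-th set becomes $\{(u, F_n(l_n^{-1}(u), g(l_n^{-1}(u)))) : u\in l_n(X)\}$. Comparing with the definition \eqref{RB} of $\Phi g$ and using the covering property \eqref{union} together with the non-overlap condition \eqref{partition}, I would conclude that the union is exactly $\{(u, \Phi g(u)) : u\in X\} = G(\Phi g)$.

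Finally, applying \eqref{GW} with $g = f$ and using $\Phi f = f$ gives $\cF_\Phi(G(f)) = G(\Phi f) = G(f)$, so $G(f)$ is invariant; since $\cF_\Phi$ is contractive with a unique attractor, that attractor must be $G(f)$. I expect the main obstacle to be the bookkeeping in the graph identity at the overlaps $l_m(X)\cap l_n(X)$: the characteristic functions $\chi_{l_n(X)}$ in \eqref{RB} are designed so that $\Phi g$ is single-valued, but the set-valued union $\cF_\Phi(G(g))$ may assign several $y$-values over a shared boundary point. Reconciling these requires the non-overlap condition \eqref{partition} to confine any multiplicity to the measure-zero boundary, together with a consistency check that the $F_n$ agree there (or an argument that boundary discrepancies do not affect the identity of the graphs as the relevant sets); handling this carefully, rather than the contraction estimate, is where the real care is needed.
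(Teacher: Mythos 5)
Your proposal is correct, but it is worth noting that the paper does not actually prove this theorem at all: its ``proof'' is a one-line citation to Theorem 5.3 of Massopust's book, applied to the present setting. What you have written is, in substance, the standard argument that the cited reference contains, so you have supplied the missing details rather than diverged from the paper. Your contraction estimate is right: with $\theta=\tfrac{1-a}{2L}$ the $d_X$-coefficient is $a+\theta L=\tfrac{1+a}{2}<1$ and the $d_Y$-part contributes $\theta c\,d_Y=c\,(\theta d_Y)$, giving the overall factor $\max\{\tfrac{1+a}{2},c\}<1$ (one small slip: $\theta c<\theta=\tfrac{1-a}{2L}$, with equality for $\theta$ itself, but this does not affect the conclusion). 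Your reparametrization $u=l_n(x)$ in the graph identity and the final fixed-point argument are exactly as they should be. The issue you flag at the end is genuine and is in fact a latent imprecision in the paper itself: the formula \eqref{RB} is written as a \emph{sum} of terms weighted by characteristic functions, so at a boundary point $u\in l_m(X)\cap l_n(X)$ it would add two contributions, whereas the set-valued union $\cF_\Phi(G(g))$ places two separate points above $u$; neither literally equals a single-valued graph there unless one either imposes join-up (matching) conditions on the $F_n$ at the overlap points, reads \eqref{RB} as a piecewise (case) definition on a half-open partition, or works in $B(X,Y)$ modulo the values on the measure-zero overlap set. Making one of these conventions explicit is the only thing your write-up still needs; with that, your argument is complete and more informative than the paper's citation.
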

\begin{proof}
For a proof, the reader may consult \cite[Theorem 5.3]{massopust1} applied to the current setting.
\end{proof}

Equation \eqref{GW} can be represented by the following commutative diagram
\be\label{diagram}
\begin{CD}
X\times Y @>\cF_\Phi>> X\times X\\
@AAGA                  @AAGA\\
B(X,Y) @>\Phi>>  B(X,Y)
\end{CD}
\ee

where $G$ is the mapping $B(X,Y)\ni g\longmapsto G(g) = \{(x, g(x))\mid x\in X\}\in X\times Y$.

On the other hand, suppose that $\cF = \{X\times Y; w_1, w_2, \ldots, w_N\}$ is an IFS whose mappings $w_n$ are of the form \eqref{wn} where the functions $l_n$ are contractive injections satisfying \eqref{union} and \eqref{partition}, and the mappings $F_n$ are  uniformly Lipschitz continuous in the first variable and uniformly contractive in the second variable. Then we can associate with the IFS $\cF$ an RB operator $\Phi_\cF$ of the form \eqref{RB}. The attractor $A_\cF$ of $\cF$ is then the graph $G(f)$ of the fixed point $f$ of $\Phi_\cF$. (This was the  approach used by Barnsley \cite{finterp} in the original definition of a fractal interpolation function.) The commutativity of the diagram \eqref{diagram} then holds with $\cF_\Phi$ replaced by $\cF$ and $\Phi$ replaced by $\Phi_\cF$.
\section{Local Iterated Function Systems}\label{sec3}

The concept of \textit{local} iterated function system is a generalization of an iterated function system (IFS). It was first introduced in \cite{barnhurd} and then reconsidered in \cite{BHM}.

\begin{definition}\label{localIFS}
Suppose that $\{X_n \st n = 1, 2, \ldots, N\}$ is a family of nonempty subsets of a Hausdorff space $X$. Further assume that for each $X_n$ there exists a continuous mapping $f_n: X_n\to X$, $n = 1, 2, \ldots, N$. Then the pair $\{X, \cF_{\loc}\}$, where $\cF_{\loc} := \{f_n : X_n\to X \st n = 1, \ldots, N\}$, is called a \emph{local iterated function system (local IFS)}.
\end{definition}

Note that if each $X_n = X$, then we recover Definition \ref{def1} of a standard (global) IFS. The possibility of selecting different domains for the continuous mapping $f_n$ adds additional flexibility. (See, \cite{barnhurd} for applications of this concept.) We note that one may choose the same $X_n$ as the domain for different mappings $f\in \cF_{\loc}$.

One can associate with a local IFS a set-valued operator $\cF_{\loc} : \bP(X) \to \bP(X)$, where $\bP(X)$ denotes the power set of $X$, by setting
\be\label{hutchop}
\cF_{\loc}(S) := \bigcup_{n=1}^N f_n (S\cap X_n).
\ee
By a slight abuse of notation, we use again the same symbol for a local IFS, its collection of functions, and its associated operator.

One can give an alternative definition for \eqref{hutchop}: For given functions $f_n$ that are only defined on
$X_n$, one introduces set functions (also denoted by $f_n$) which are defined on $\bP(X)$ via
\[
f_n (S) := \begin{cases} f_n (S\cap X_n), & S\cap X_n\neq \emptyset;\\ \emptyset, & S\cap X_n = \emptyset,\end{cases}  \qquad n = 1, \ldots, N.
\]
On the left-hand side of the above equation, $f_n (S\cap X_n)$ is the set of values of the original $f_n$ as in the previous definition. This extension of a given function $f_n$ to sets $S$ which include elements which are not in the domain of $f_n$ basically just ignores those elements. In the following we use this definition of the set functions $f_n$. 

\begin{definition}
A subset $A\in \bP(X)$ is called a \emph{local attractor} for the local IFS $\{X, \cF_{\loc}\}$ if
\be\label{attr}
A = \cF_{\loc} (A) = \bigcup_{n = 1}^N f_n (A\cap X_n).
\ee
\end{definition}
In \eqref{attr} it is allowed that $A\cap X_n$ is empty. Hence, every local IFS has at least one local attractor, namely $A = \emptyset$. However, it may also have many distinct ones. In this case, if $A_1$ and $A_2$ are distinct local attractors, then $A_1\cup A_2$ is also a local attractor. Thus, there exists a largest local attractor for $\cF_{\loc}$, namely the union of all distinct local attractors. This largest local attractor is referred to as {\em the} local attractor of a local IFS $\cF_{\loc}$. For more details about local attractors and their relation to the global attractor, we refer the interested reader to \cite{BHM,mass}
\section{Self-referentiality and fractels of functions}
\subsection{Fractels}\label{fractels}
Let $X$ and $Y$ be complete metric spaces, with metrics $d_{X}$ and $d_{Y}$
respectively. Let $l:X\rightarrow X$ be an injection.

\begin{definition}
Let $f: \dom (f) \subseteq X\rightarrow Y$ and let $G(f)$ denote the graph of $f$.  An invertible mapping $w\in C(X\times Y)$ of the form
\begin{equation}
w(x,y)=(l(x),F(x,y)),\label{fractel1}
\end{equation}
for some $F: X\times Y \to Y$,  and which satisfies
\be\label{def:fractel}
w(G(f))\subseteq G(f),
\ee
is called a \emph{fractel} for $f$.
\end{definition}

The name ``fractel'' is a short version of ``fractal element;'' a fractal element
is an element of a set of functions that comprise an IFS of the form
$\mathcal{F}(X\times Y)$. An attractor of a contractive IFS may be a fractal
subset in $\mathbb{R}^{n}$. Contractive fractels for $f:X\rightarrow Y$ may be
used to construct an IFS $\mathcal{F}(X\times Y)$ whose attractor is contained
in, or equal to, $G(f)$. A more general notion of fractel, and applications of
fractels, will be discussed in \cite{fractel}.%

Note that the identity function $\id_{X\times Y}$ on $X\times Y$ is a fractel for any function $f$. We call this fractel the \emph{trivial fractel}.

In the following, we assume that all fractels are nontrivial, unless stated otherwise.

\begin{proposition}\label{subset}
If $w = (l,F)$ is a fractel for $f$, then $l(\dom (f)) \subsetneq \dom (f)$.
\end{proposition}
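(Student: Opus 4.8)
The plan is to prove the two halves of the strict inclusion separately: first that $l(\dom(f))\subseteq\dom(f)$, which is a direct consequence of the fractel condition \eqref{def:fractel}, and then that this inclusion is proper, which is where the assumption of nontriviality must enter.

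For the inclusion, I would take an arbitrary $x\in\dom(f)$, so that $(x,f(x))\in G(f)$, and apply $w$. By \eqref{fractel1} we have $w(x,f(x))=(l(x),F(x,f(x)))$, and by \eqref{def:fractel} this point lies in $G(f)$. Since a pair belongs to $G(f)$ only when its first coordinate lies in $\dom(f)$, it follows that $l(x)\in\dom(f)$. As $x$ was arbitrary, $l(\dom(f))\subseteq\dom(f)$. As a byproduct, comparing the second coordinates yields the self-referential identity $f(l(x))=F(x,f(x))$ for all $x\in\dom(f)$, which I expect to need in the second half.

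For properness I would argue by contradiction, assuming $l(\dom(f))=\dom(f)$. Since $l$ is injective, it then restricts to a bijection of $\dom(f)$ onto itself, and together with the identity $f\circ l=F(\cdot,f(\cdot))$ this forces $w$ to map $G(f)$ bijectively onto $G(f)$. The objective is then to contradict $w\neq\id_{X\times Y}$. I expect the main obstacle to lie precisely here: the bare inclusion and the injectivity of $l$ do not by themselves exclude $l$ from acting as a bijection of $\dom(f)$ (consider, for example, $l=\id_X$ with a nonidentity $F$ fixing $G(f)$ pointwise), so the contradiction must exploit the contractive nature of a nontrivial fractel rather than these hypotheses alone. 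Concretely, I would invoke that the $l$ of such a fractel is a strict contraction on $\dom(f)$, so that it strictly decreases the diameter of any nondegenerate subset and hence cannot be surjective onto $\dom(f)$; this contradicts the assumed equality and forces $l(\dom(f))\subsetneq\dom(f)$. The delicate cases to dispatch separately are the degenerate ones—when $\dom(f)$ is a singleton or is unbounded—where a pure diameter estimate is insufficient and one must instead argue directly from the fixed-point structure of the contraction $l$.
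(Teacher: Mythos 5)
Your first half is correct and is the only part of the statement that actually follows from the definitions as given: applying $w$ to a point $(x,f(x))\in G(f)$ and using \eqref{def:fractel} yields $l(x)\in\dom(f)$ together with the identity $f(l(x))=F(x,f(x))$ of Proposition \ref{Lemma1}. Your observation that injectivity of $l$ plus the inclusion cannot by themselves rule out $l$ acting as a bijection of $\dom(f)$ is also right.

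The gap is in the step you flag but do not close. You propose to get properness from $l$ being a strict contraction, but the definition of a fractel in this paper only requires $l$ to be an injection; contractivity is not a hypothesis of Proposition \ref{subset}, and indeed Proposition \ref{prop2.4} runs the implication the other way, deducing an eigenvalue bound for linear fractels \emph{from} Proposition \ref{subset}, so importing contractivity here is both unavailable and circular relative to the paper's logic. Worse, even granting a strict contraction, the ``delicate cases'' you defer are not dispatchable: in the paper's own Example \ref{ex1}, $f(x)=ax^{p}$ on $\dom(f)=[0,\infty)$ has the fractel $w(x,y)=(x/2,y/2^{p})$, and $l(x)=x/2$ is a strict contraction mapping $[0,\infty)$ \emph{onto} itself, so $l(\dom(f))=\dom(f)$ and the strict inclusion fails outright; the singleton case $\dom(f)=\{x^{*}\}$ with $l(x^{*})=x^{*}$ is equally fatal. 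For comparison, the paper's proof takes a different route -- it reduces the claim to ``$w$ fixes no point of $G(f)$'' and asserts this follows from nontriviality -- but that reduction does not close these cases either (nontriviality of $w$ on $X\times Y$ need not move every point of the graph, and even pointwise non-fixedness does not prevent $l$ from permuting $\dom(f)$). The honest conclusion is that $l(\dom(f))\subseteq\dom(f)$ is provable, while strictness requires extra hypotheses (e.g.\ $\dom(f)$ compact and not a singleton, $l$ a strict contraction) that neither your argument nor the paper's supplies.
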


\begin{proof}
We need to show that $w(x, f(x)) \neq (x, f(x))$, $\forall\,x\in \dom(f)$.  This follows from \eqref{def:fractel} and the nontriviality of $w$.
\end{proof}

\begin{proposition}
\label{Lemma1} The function $w:X\times Y\rightarrow$ $X\times Y$ defined by
(\ref{fractel1}) is a fractel for $f$ if and only if
\begin{equation}
F(x,f(x))=f(l(x)), \label{fractel2}%
\end{equation}
for all $x\in\dom(f) \subseteq X$.
\end{proposition}

\begin{proof}
Suppose (\ref{fractel2}) is true. Then%
\begin{align*}
w(G(f))  &  =\{(x,F(x,f(x)) \mid x\in\dom(f)\subseteq X\}\\
&  =\{(x,f(l(x))) \mid x\in \dom(f)\subseteq X\}\subseteq G(f).
\end{align*}
Conversely, suppose that $w(G(f))\subseteq G(f)$. Then
\[
\{(l(x),F(x,f(x))) \mid x\in\dom(f)\subseteq X\}\subseteq\{(x,f(x)) \mid x\in\dom(f)\subseteq X\}\text{,}%
\]
from which it is immediate that $F(x,f(x))=f(l(x))$, for all $x\in\dom(f)\subseteq X$.
\end{proof}

Notice that different functions may share the same fractel.

\begin{example}\label{ex1}
A nontrivial fractel for $f:[0,\infty)\rightarrow\mathbb{R}$ defined by $f(x):=ax^{p}$,
where $a,p\in\mathbb{R}$, is $w(x,y)=(\frac{x}{2},\frac{y}{2^{p}}).$ Here
$F(x,y)=\frac{y}{2^{p}}$ so that $F(x,a x^{p})=\frac{a x^{p}}{2^{p}}=f(\frac{x}%
{2})$. The fractel $(\frac{x}{2},\frac{y}{2^{p}})$ is independent of $a$ and
linear in $(x,y)$.
\end{example}

\begin{remark}
Suppose $w$ is a fractel for $f:X\to Y$. We consider the following more general setting. Let $D\subset X$ and let $\theta_{D}:2^{X}\rightarrow2^{X}$ be defined by $\theta_{D}(S) := S \cap D$, for all $S\in$ $2^{X}$. We refer then to $w\circ\theta_{D}:2^{X\times Y}\rightarrow2^{X\times Y}$ as a \textbf{local fractel} for $f$. Local fractels for $f:X\rightarrow Y$ may be used to construct a local IFS $\mathcal{F}(X\times Y)$ whose attractor is contained in, or equal to, $G(f)$. For an introduction to local IFSs and their properties, we mention \cite{BHM,mass}.
\end{remark}

Under various conditions, and in various manners, fractels can be composed to
make new fractels, as we illustrate here.

\begin{proposition}
Suppose $w_1(x,y) = (l_1 (x),F_1(x,y))$ and $w_2 = (l_2 (x),F_2(x,y))$ are fractels for $f:X\rightarrow Y$. Then $w_1\circ w_2(x,y) = (l_1\circ l_2 (x),F_1(l_2 (x),F_2(x,y)))$ is also a fractel for $f$.
\end{proposition}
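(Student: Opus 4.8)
The plan is to verify the fractel condition from Proposition~\ref{Lemma1} for the composite map. By that proposition, a map of the form $w = (l, F)$ is a fractel for $f$ precisely when $F(x, f(x)) = f(l(x))$ for all $x \in \dom(f)$. So rather than checking $w_1 \circ w_2(G(f)) \subseteq G(f)$ directly, I would first compute the composition $w_1 \circ w_2$ explicitly, confirm it has the required product form $(\widetilde{l}, \widetilde{F})$ with $\widetilde{l} = l_1 \circ l_2$ and $\widetilde{F}(x,y) = F_1(l_2(x), F_2(x,y))$, and then check the single functional equation \eqref{fractel2} for this $\widetilde{F}$.

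First I would carry out the composition. Writing $w_2(x,y) = (l_2(x), F_2(x,y))$ and then applying $w_1$, we get
\[
w_1 \circ w_2 (x,y) = w_1(l_2(x), F_2(x,y)) = \bigl(l_1(l_2(x)),\, F_1(l_2(x), F_2(x,y))\bigr),
\]
which confirms the stated form with $\widetilde{l} = l_1 \circ l_2$ and $\widetilde{F}(x,y) = F_1(l_2(x), F_2(x,y))$. Next I would verify the fractel equation. Fix $x \in \dom(f)$ and substitute $y = f(x)$ into $\widetilde{F}$:
\[
\widetilde{F}(x, f(x)) = F_1\bigl(l_2(x), F_2(x, f(x))\bigr).
\]
Since $w_2$ is a fractel for $f$, Proposition~\ref{Lemma1} gives $F_2(x, f(x)) = f(l_2(x))$, so the inner argument becomes $f(l_2(x))$, yielding $\widetilde{F}(x,f(x)) = F_1(l_2(x), f(l_2(x)))$. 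Now I would apply the fractel property of $w_1$: setting $x' := l_2(x)$, Proposition~\ref{Lemma1} for $w_1$ gives $F_1(x', f(x')) = f(l_1(x'))$, hence $F_1(l_2(x), f(l_2(x))) = f(l_1(l_2(x))) = f(\widetilde{l}(x))$. Chaining these equalities produces exactly $\widetilde{F}(x, f(x)) = f(\widetilde{l}(x))$, which is \eqref{fractel2} for $w_1 \circ w_2$.

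The one point requiring care is a domain check: when I apply the fractel property of $w_1$ at the point $x' = l_2(x)$, I need $x' \in \dom(f)$. This is guaranteed because Proposition~\ref{subset} ensures $l_2(\dom(f)) \subseteq \dom(f)$, so $l_2(x) \in \dom(f)$ whenever $x \in \dom(f)$; thus the substitution is legitimate. I would also note that $w_1 \circ w_2$ is again invertible and continuous (as a composition of invertible continuous maps), so it genuinely qualifies as a fractel rather than merely satisfying the graph-inclusion equation. I expect no substantive obstacle here: the result is essentially the observation that fractels are closed under composition because the functional equation \eqref{fractel2} propagates through the two-step substitution, with the domain-invariance from Proposition~\ref{subset} being the only hypothesis that must be invoked explicitly.
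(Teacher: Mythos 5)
Your proof is correct and follows essentially the same route as the paper: both verify the functional equation of Proposition~\ref{Lemma1} by substituting $y = f(x)$, using the fractel property of $w_2$ to reduce the inner argument to $f(l_2(x))$, and then the fractel property of $w_1$ at $l_2(x)$. The paper's proof is a one-line version of your computation; your added remarks on the domain check via Proposition~\ref{subset} and on invertibility of the composition are sensible but left implicit in the paper.
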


\begin{proof}
We use Proposition \ref{Lemma1}. We have, for all $x\in\dom(f)\subseteq X$,
\[
F_1 (l_2 (x),F_2 (x,f(x))) = F_1 (l_2(x),f(l_2(x)))=f(l_1\circ l_2 (x)).\qedhere
\]
\end{proof}

\begin{corollary}
The collection of all nontrivial fractels of a given function $f$ forms a semi-group under $\circ$. The collection of all fractels of a function $f$ forms a monoid under $\circ$ where the identity element is the trivial fractel.
\end{corollary}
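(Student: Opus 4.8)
The plan is to verify the defining axioms of a semigroup and a monoid for the collection of (nontrivial) fractels under $\circ$. Associativity is inherited for free from the associativity of composition of maps on $X\times Y$ and requires no argument, so the whole proof reduces to checking closure and, for the monoid, the existence of a two-sided identity.

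First I would dispatch the monoid claim. Closure is precisely the content of the preceding Proposition: if $w_1$ and $w_2$ are fractels for $f$, then $w_1\circ w_2$ is again a fractel for $f$. For the identity, recall that the trivial fractel $\id_{X\times Y}$ is a fractel for every $f$, and that $w\circ \id_{X\times Y} = \id_{X\times Y}\circ w = w$ for every fractel $w$. Thus $\id_{X\times Y}$ is a two-sided neutral element, and the collection of all fractels of $f$ is a monoid.

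Next I would treat the semigroup of nontrivial fractels. Associativity is again automatic, and closure in the ambient monoid is the preceding Proposition; the one additional point is that the composite of two nontrivial fractels is itself nontrivial, i.e.\ that $w_1\circ w_2\neq \id_{X\times Y}$. This is exactly where Proposition \ref{subset} does the work: writing $w_i=(l_i,F_i)$, it supplies $l_1(\dom(f))\subsetneq \dom(f)$ and $l_2(\dom(f))\subsetneq \dom(f)$. Since the first coordinate of $w_1\circ w_2$ is $l_1\circ l_2$, and $A\subseteq B$ implies $l_1(A)\subseteq l_1(B)$, we obtain
\[
(l_1\circ l_2)(\dom(f)) \subseteq l_1(\dom(f)) \subsetneq \dom(f),
\]
so $l_1\circ l_2$ cannot agree with $\id_X$ on $\dom(f)$; in particular $w_1\circ w_2\neq \id_{X\times Y}$. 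Hence the composite is a nontrivial fractel, the nontrivial fractels are closed under $\circ$, and they form a semigroup.

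The main (and indeed the only non-formal) obstacle is this last closure step: a priori two nontrivial fractels could compose to the trivial one, just as two non-identity bijections can compose to the identity. Proposition \ref{subset} rules this out by forcing the strictly shrinking behaviour $l(\dom(f))\subsetneq \dom(f)$ on the base map of any nontrivial fractel, and such strict shrinking cannot be undone by a second strictly shrinking map. I would also remark that this same phenomenon explains why the nontrivial fractels form only a semigroup and not a monoid: the natural unit $\id_{X\times Y}$ is exactly the trivial fractel that has been removed.
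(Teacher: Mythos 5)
Your proof is correct and follows the route the paper intends: the corollary is stated without proof as an immediate consequence of the preceding composition proposition together with the observation that $\id_{X\times Y}$ is a fractel for every $f$. Your extra step verifying that the composite of two nontrivial fractels is nontrivial (via Proposition \ref{subset} and the strict inclusion $(l_1\circ l_2)(\dom(f))\subsetneq\dom(f)$) is exactly the one detail the paper leaves implicit, and you handle it correctly.
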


These latter properties give now rise to the following definition.

\begin{definition}
A function $f$ is called \emph{self-referential} if it possesses a nontrivial semigroup of fractels.
\end{definition}

For our later purposes, in particular in the context of computational issues, we now introduce a special class of fractels. 

\begin{definition}
Let $X:= \R^d$ and $Y:= \R^m$. Denote by $\GL(m,\R)$ the general linear group on $\R^m$ and by $\Aff (d, \R) = \R^d \rtimes \GL(d,\R)$ the affine group over $\R^d$. Let $f:\R^d\to \R^m$ be any function. A fractel $w = (l, F)$ of $f$ is called \emph{linear} if
\begin{enumerate}
\item[(i)]	$l\in \Aff(d,\R)$,
\item[(ii)]	$F(x, \mydot)\in \GL(m, \R)$, $x\in \R^d$.
\end{enumerate}
\end{definition}
In other words, a linear fractel $w$ has the explicit form
\be\label{eq.3.4}
w(x,y) = (Ax + b, My),
\ee
where $A$ and $M$ are invertible matrices and $b\in \R^d$. In the case when $d := 1$, we write a linear fractel as
\[
w(x,y) = (\alpha x + \beta, My),
\]
where $\alpha\in\R\setminus\{0\}$ and $\beta\in \R$.

\begin{proposition}\label{prop2.4}
Let $w$ be a linear fractel. Then the matrix $A\in \GL(d,\R)$ has at least one eigenvalue with modulus less than 1. In particular, if $d = 1$, then $|\alpha| < 1$.
\end{proposition}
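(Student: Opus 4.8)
The plan is to translate the fractel condition into a statement about the affine map $l(x)=Ax+b$ alone, and then read off the spectral information about $A$ from the way $l$ sits inside the domain of $f$. First I would record, via Proposition \ref{Lemma1}, that being a linear fractel for $f$ forces $M f(x)=f(l(x))$ for every $x\in\dom(f)$; in particular $l(x)\in\dom(f)$ whenever $x\in\dom(f)$, so $l(\dom(f))\subseteq\dom(f)$, and Proposition \ref{subset} sharpens this to the \emph{strict} inclusion $l(\dom(f))\subsetneq\dom(f)$. Since $l$ is affine and $A$ is invertible, it is convenient to pass to the bounded convex body $C:=\overline{\conv(\dom(f))}$: affinity gives $l(C)=\overline{\conv(l(\dom(f)))}\subseteq C$, so $l$ is an affine self-map of a bounded convex set. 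All spectral information about $A$ will be extracted from this single geometric fact.

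The core is the one-dimensional case, which I would settle by comparing sizes. Here $C$ is a bounded interval and $l(x)=\alpha x+\beta$, so $\operatorname{diam} l(C)=|\alpha|\,\operatorname{diam} C$. From $l(C)\subseteq C$ we get $|\alpha|\le 1$ immediately. To exclude $|\alpha|=1$ I would use strictness: if $|\alpha|=1$ then $l$ is an isometry of $\R$, so $l(C)$ is a subinterval of $C$ of the same length and must therefore equal $C$, contradicting $l(\dom(f))\subsetneq\dom(f)$ (which, since $\dom(f)=[0,1]$ is a full interval in the cases of interest, persists to $l(C)\subsetneq C$). Hence $|\alpha|<1$, which is exactly the assertion for $d=1$.

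For general $d$ I would replace the diameter comparison by a volume comparison. Because $\mu(l(C))=|\det A|\,\mu(C)$ for Lebesgue measure $\mu$, and $C$ has finite positive measure, $l(C)\subseteq C$ yields $|\det A|=\prod_i|\lambda_i|\le 1$, so at least one eigenvalue satisfies $|\lambda_i|\le 1$. The boundedness of $C$ gives more: forward orbits of $l$ stay in $C$, which forces the spectral radius of $A$ to be $\le 1$ and (when $C$ is $d$-dimensional) rules out non-semisimple unit eigenvalues, since their Jordan blocks would make $\|A^k\|$, and hence $l^k(C)$, unbounded. To upgrade the conclusion to a \emph{strict} inequality $|\lambda_i|<1$ for at least one $i$, I would feed the strict inclusion $l(C)\subsetneq C$ back into this chain.

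I expect the genuine obstacle to be precisely this last borderline: the case in which every eigenvalue of $A$ has modulus exactly $1$. Then $|\det A|=1$, so $l$ is volume-preserving, the crude measure inequality degenerates to an equality, and it yields nothing; one must instead show that a volume-preserving affine map cannot embed a bounded convex body \emph{properly} into itself. Concretely, the task is to convert the strict set inclusion $l(C)\subsetneq C$ into a strict decrease of some invariant width (or of volume), thereby exhibiting a direction along which $A$ genuinely contracts and hence an eigenvalue of modulus $<1$. This is where the real work lies; once the unit-modulus case is excluded, the eigenvalue of smallest modulus is $<1$ and the proposition follows.
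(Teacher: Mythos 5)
Your opening reduction---to the strict inclusion $l(\dom(f))\subsetneq\dom(f)$---is in fact the \emph{entirety} of the paper's own proof, which consists of the single remark that the claim ``follows immediately from Proposition \ref{subset} and the nontriviality of $w$.'' Your $d=1$ argument (diameter comparison on a bounded interval, with strictness ruling out $|\alpha|=1$) is a correct way of making that deduction explicit in one dimension, provided $\dom(f)$ is a bounded interval. But for $d\ge 2$ your proposal is not a proof: you announce that the whole difficulty is to exclude the case in which every eigenvalue of $A$ has modulus $1$, and then stop. Naming the crux does not resolve it, and this particular crux genuinely resists your strategy. The volume comparison gives only $|\det A|\le 1$, which degenerates to an equality exactly in the problematic case; and the strict inclusion you want to exploit lives at the level of $\dom(f)$, not of its convex hull $C$. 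If $\dom(f)$ is not itself a convex body, $l(C)=C$ is perfectly compatible with $l(\dom(f))\subsetneq\dom(f)$: take $d=2$, let $A$ be rotation by an irrational multiple of $2\pi$, $b=0$, fix $x_0\neq 0$, set $\dom(f)=\{A^kx_0 : k\ge 0\}$ and $f(A^kx_0)=M^kv_0$ for some invertible $M$ and $v_0\neq 0$. Then $(l(x),My)$ is a nontrivial linear fractel for $f$ and $l(\dom(f))=\dom(f)\setminus\{x_0\}\subsetneq\dom(f)$, yet both eigenvalues of $A$ have modulus $1$. So the step you defer is not merely hard; it is false in the stated generality, and no amount of convex geometry applied to $C$ alone will close the gap without extra hypotheses on $\dom(f)$ (e.g.\ that it is a compact convex body).

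A secondary issue is that you assume $\dom(f)$ bounded from the outset (``the bounded convex body $C$''), whereas the definition of a linear fractel allows arbitrary $f:\R^d\to\R^m$; without boundedness even the $d=1$ claim fails (a periodic $f$ on $\R$ with $l$ translation by a period and $M=1$). In fairness, the same objections apply to Proposition \ref{subset} itself, on which the paper's one-line proof rests, and your attempt has the merit of exposing that the paper's ``immediately'' conceals both real content and missing hypotheses. As written, though, your proposal establishes the proposition only for $d=1$ with $\dom(f)$ a bounded interval, and the general-$d$ case is left open.
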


\begin{proof}
This follows immediately from Proposition \ref{subset} and the nontrivially of $w$. 
\end{proof}

\begin{remark}
Proposition \ref{prop2.4} shows that the collection of all fractels, including the trivial fractel, cannot form a group under function composition as the existence of an inverse element would entail that the matrix $M$ has at least one eigenvalue greater than one in modulus.
\end{remark}

\subsection{Fractels and the RB operator}
We use the notation and terminology from Sections \ref{IFSRB} and \ref{fractels}.
\begin{definition}
Let $w = (l,F)$ be a fractel for the function $f:\dom(f)\subseteq X\to Y$. Then the operator $\Phi_w$ with
\be
\Phi_w f (x) := F(l^{-1}(x), f\circ l^{-1} (x)), \quad x\in \dom(\Phi_w f) = l (\dom (f)),
\ee
is called the \emph{Read-Bajraktarevi\'c (RB) operator of the fractel $w$}.
\end{definition}

\begin{proposition}\label{prop:RB}
If $\Phi_w$ is the RB operator for the fractel $w$, then
\be\label{fractelRB}
w(G(f)) = G(\Phi_w f).
\ee
\end{proposition}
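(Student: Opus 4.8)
The plan is to prove the set equality \eqref{fractelRB} by a direct computation, unwinding the definitions of the graph, the fractel $w = (l,F)$, and the RB operator $\Phi_w$. The key observation is that the right-hand side $G(\Phi_w f)$ is the graph of a function whose domain is $l(\dom(f))$, and that this domain matches exactly the $x$-coordinates produced by applying $w$ to $G(f)$, since $l$ is an injection. So the whole argument reduces to tracking the $x$- and $y$-coordinates through the two descriptions and checking they agree.

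First I would write out the left-hand side directly from \eqref{fractel1}:
\[
w(G(f)) = \{\,w(x,f(x)) \mid x \in \dom(f)\,\} = \{\,(l(x),\,F(x,f(x))) \mid x \in \dom(f)\,\}.
\]
Next I would substitute $u := l(x)$. Because $l$ is injective, as $x$ ranges over $\dom(f)$ the point $u$ ranges bijectively over $l(\dom(f))$, and we may recover $x = l^{-1}(u)$ on this range. Rewriting the set in terms of $u$ then gives
\[
w(G(f)) = \{\,(u,\,F(l^{-1}(u),\,f(l^{-1}(u)))) \mid u \in l(\dom(f))\,\}.
\]
Finally I would compare this with the definition of $\Phi_w$, which has $\dom(\Phi_w f) = l(\dom(f))$ and value $\Phi_w f(u) = F(l^{-1}(u), f\circ l^{-1}(u))$; by the definition of the graph, the displayed set is precisely $G(\Phi_w f)$, completing the proof.

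I do not expect a genuine obstacle here, since the statement is essentially definitional once the change of variable $u = l(x)$ is made; the only point requiring a little care is the role of injectivity of $l$, which guarantees that $l^{-1}$ is well-defined on $l(\dom(f))$ so that the expressions $F(l^{-1}(u),\dots)$ and the domain $l(\dom(f))$ are meaningful and that no two distinct preimages collapse to the same first coordinate. It is worth noting that the proposition makes no use of the fractel condition \eqref{def:fractel}, i.e.\ of $w(G(f)) \subseteq G(f)$: the identity \eqref{fractelRB} holds for any $w$ of the form \eqref{fractel1} with $l$ injective, and the fractel property is what additionally forces $G(\Phi_w f) \subseteq G(f)$, equivalently $\Phi_w f = f$ on $l(\dom(f))$. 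I would remark on this to clarify exactly where the hypotheses enter.
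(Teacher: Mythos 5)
Your proof is correct and follows essentially the same route as the paper's: write out $w(G(f))$ explicitly, reparametrize via $u = l(x)$ using the injectivity of $l$, and recognize the result as $G(\Phi_w f)$ by the definition of the RB operator. Your added remark that the fractel condition $w(G(f))\subseteq G(f)$ is not actually needed for this identity is accurate and a nice clarification, but the substance of the argument matches the paper.
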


\begin{proof}
We have
\begin{align*}
w (G(f)) &= \left\{(l(x), F(x,f(x)))  \mid  x\in \dom(f)\right\}\\
& = \left\{(x, F(l^{-1}(x), f\circ l^{-1}(x)))  \mid  x\in l(\dom(f)) \right\}\\
& = \left\{(x, \Phi_w f (x))  \mid  x\in \dom(\Phi_w f)\right\} = G(\Phi_w f).\qedhere
\end{align*}
\end{proof}

\begin{example}
Consider the fractels $w(x,y) = (\frac{x}{2}, \frac{y}{2^p} )$ from Example \ref{ex1}. The RB operator $\Phi_w$ of $w$  is given by $(\Phi_w f)(x) = F(2x, f(2x)) = \frac{f(2x)}{2^p}$ for any function $f$.
\end{example}

Next, we state some results that characterize fractels in terms of their RB operator.

\begin{theorem}
Let $f:\dom(f)\subset X\to Y$ be a function. Then, $w = (l,F)$ is a fractel of $f$ iff any of the following statements holds.
\begin{enumerate}
\item[(i)]	$G(\Phi_w f) \subset G(f)$.
\item[(ii)]	$l(\dom(f)) \subset \dom(f)$ and $\Phi_w f (x) = f(x)$, $\forall\,x\in l(\dom(f))$.  
\end{enumerate}
Moreover, $w = (l,F)$ is a linear fractel of $f$ iff $l(\dom(f)) \subset \dom(f)$ and $f(Ax + b) = Mf(x)$, $\forall\,x\in \dom(f)$, where $A$, $M$, and $b$ are the matrices and vector defining $w$ as in \eqref{eq.3.4}.
\end{theorem}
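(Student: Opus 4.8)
The plan is to establish the iff-characterizations by unwinding the definition of the RB operator $\Phi_w$ together with Proposition~\ref{prop:RB}, which already gives the key identity $w(G(f)) = G(\Phi_w f)$. Indeed, by definition $w$ is a fractel of $f$ precisely when $w(G(f)) \subseteq G(f)$, so the equivalence with statement (i) is nothing more than a restatement of this defining condition through the lens of Proposition~\ref{prop:RB}: since $w(G(f)) = G(\Phi_w f)$, the inclusion $w(G(f)) \subseteq G(f)$ holds iff $G(\Phi_w f) \subseteq G(f)$. I would state this first and dispatch it in one line.

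For the equivalence with statement (ii), the point is to translate the graph inclusion $G(\Phi_w f) \subseteq G(f)$ into a pointwise statement. First I would recall that $\dom(\Phi_w f) = l(\dom(f))$ by the definition of $\Phi_w$, and that $G(\Phi_w f) = \{(x, \Phi_w f(x)) \mid x \in l(\dom(f))\}$. The inclusion $G(\Phi_w f) \subseteq G(f)$ says that for every $x \in l(\dom(f))$, the point $(x, \Phi_w f(x))$ lies on the graph of $f$; since $f$ is a function, this is equivalent to requiring both that each such $x$ lies in $\dom(f)$ — that is, $l(\dom(f)) \subseteq \dom(f)$ — and that $\Phi_w f(x) = f(x)$ there. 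This gives statement (ii) cleanly. I would present both implications, noting that the forward direction uses that $f$ is single-valued so that membership of $(x, \Phi_w f(x))$ in $G(f)$ forces $\Phi_w f(x) = f(x)$.

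For the final claim about linear fractels, I would specialize $w$ to the form \eqref{eq.3.4}, so that $l(x) = Ax + b$ and $F(x,y) = My$ with $A, M$ invertible. Using Proposition~\ref{Lemma1}, $w$ is a fractel of $f$ iff $F(x, f(x)) = f(l(x))$ for all $x \in \dom(f)$, which here reads $M f(x) = f(Ax + b)$. Combining this with the containment $l(\dom(f)) \subseteq \dom(f)$ — which holds by Proposition~\ref{subset} (or follows from the already-established part (ii)) and is needed for $f(Ax+b)$ to be meaningful — yields exactly the stated condition $f(Ax + b) = M f(x)$ for all $x \in \dom(f)$.

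I do not expect any serious obstacle here, since all three equivalences are essentially bookkeeping around Propositions~\ref{subset},~\ref{Lemma1}, and~\ref{prop:RB}. The one subtlety worth stating carefully is the role of $f$ being single-valued in the forward direction of (ii): the graph inclusion $G(\Phi_w f) \subseteq G(f)$ automatically supplies both the domain containment and the pointwise agreement, and conversely these two together reconstruct the inclusion. The only minor care needed is to keep track of the domain $l(\dom(f))$ on which $\Phi_w f$ is defined, so that the equality $\Phi_w f(x) = f(x)$ is asserted exactly where both sides make sense.
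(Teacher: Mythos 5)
Your proposal is correct and follows essentially the same route as the paper: part (i) is reduced to Proposition~\ref{prop:RB}, part (ii) to a pointwise restatement obtained from the domain containment of Proposition~\ref{subset} together with the characterization $F(x,f(x))=f(l(x))$ of Proposition~\ref{Lemma1} (your derivation of (ii) directly from the graph inclusion via single-valuedness of $f$ is an equivalent, slightly more streamlined phrasing of the paper's change-of-variables chain), and the linear case by specializing $l(x)=Ax+b$, $F(x,y)=My$ in those same propositions. No gaps.
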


\begin{proof}
Ad (i): If $w$ is a fractel then $w(G(f)) \subset G(f)$. But by Proposition \ref{prop:RB}, $w(G(f)) = G(\Phi_w f)$ and thus $G(\Phi_w f) \subset G(f)$. Conversely, let $\Phi_w f (x) = F(l^{-1}(x), f\circ l^{-1} (x))$, where $x\in \dom(\Phi_w f) = l (\dom (f))$. Set $w := (l,F)$. Then 
\begin{align*} 
w(G(f)) &= \{(l(x), F(x,f(x))  \mid  x \in \dom(f)\}\\
& = \{(x, F(l^{-1}(x), f\circ l^{-1} (x))  \mid  x\in l(\dom(f))\}\\
& =  \left\{(x, \Phi_w f (x))  \mid  x\in \dom(\Phi_w f)\right\} = G(\Phi_w f) \subset G(f).
\end{align*}
Hence, $w$ is a fractel of $f$.

Ad (ii): Suppose $w = (l,F)$ is a fractel for $f$. Then, by Proposition \ref{subset}, $l(\dom(f)) \subset \dom(f)$. Moreover,  
\begin{align*}
\text{$w$ fractel} &\;\Longleftrightarrow\; F(x, f(x)) = f(l(x)), \quad\forall\:x\in \dom(f),\\
& \;\Longleftrightarrow\;  F(l^{-1} (x)), f\circ l^{-1}(x)) = f(x), \quad\forall\;x\in l(\dom(f)),\\
& \;\Longleftrightarrow\;\Phi_w f (x) = f(x), \quad\forall\; x\in l(\dom(f)).
\end{align*}
%

The last statement follows from Propositions \ref{subset} and \ref{Lemma1} with $l(x) = Ax + b$ and $F(x,y) = My$.
\end{proof}
\subsection{Algebra of fractels}
The following discussion introduces ideas related to the existence of
fractels and the algebraic manipulation of fractels. It is relevant to properties
of equivalence classes of functions that share the same fractel.

Propositions \ref{lemma0} and \ref{lemma1b} below provide basic tools for constructing fractels.

\begin{proposition}
\label{lemma0} If $f:X\rightarrow Y$ is bijective, then a fractel for $f$ is $w:X\times Y\mathbb{\rightarrow}X\times Y$ defined by 
\[
w(x,y) := (l(x),f\circ l\circ f^{-1}(y)),\quad\forall\,(x,y)\in X\times Y.
\]
\end{proposition}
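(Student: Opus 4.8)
The plan is to verify the fractel condition from Proposition~\ref{Lemma1}, namely that $F(x,f(x)) = f(l(x))$ for all $x \in \dom(f) = X$, where here $F(x,y) = f\circ l\circ f^{-1}(y)$. The key observation is that the proposed $F$ does not actually depend on the first variable $x$; it is simply the conjugate $f\circ l\circ f^{-1}$ acting on the second coordinate. So the entire content of the proof reduces to a one-line substitution.

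First I would note that bijectivity of $f$ guarantees $f^{-1}$ exists, so that $F$ is well-defined on all of $X\times Y$. Then I would substitute $y = f(x)$ into the second coordinate and compute
\[
F(x,f(x)) = f\circ l\circ f^{-1}(f(x)) = f\circ l\,(x) = f(l(x)),
\]
using $f^{-1}\circ f = \id_X$. By Proposition~\ref{Lemma1}, this identity (holding for every $x\in X$) is exactly equivalent to $w$ being a fractel for $f$. I would also remark that $w$ has the required form \eqref{fractel1} with this choice of $F$, and that it is a genuine map $X\times Y \to X\times Y$.

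The only subtlety worth flagging is the standing hypothesis that a fractel should be an \emph{invertible} element of $C(X\times Y)$. Invertibility of $w$ follows provided $l$ is invertible on $X$ (which is consistent with the blanket assumption that $l$ is an injection, once one restricts attention to its range) and $f\circ l\circ f^{-1}$ is invertible in the second slot; since $f$ is bijective, the latter reduces to invertibility of $l$ as well. If $l$ is a bijection of $X$, then $w^{-1}(x,y) = (l^{-1}(x),\, f\circ l^{-1}\circ f^{-1}(y))$, and continuity questions are inherited from those of $l$ and $f$. I do not anticipate a genuine obstacle here: the proof is essentially the algebraic cancellation above, and the remaining points are matters of checking that the blanket hypotheses on $l$ and $f$ supply the structural properties (continuity, invertibility) demanded by the definition of a fractel.
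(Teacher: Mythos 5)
Your proof is correct and follows essentially the same route as the paper's: write $F(x,y)=f\circ l\circ f^{-1}(y)$, observe that $F(x,f(x))=f\circ l\circ f^{-1}(f(x))=f(l(x))$, and invoke Proposition~\ref{Lemma1}. Your extra remarks on invertibility and continuity of $w$ go slightly beyond what the paper records but do not change the argument.
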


\begin{proof}
Write $w(x,y)=(l(x),F(x,y))$ where $F(x,y) := f\circ l\circ f^{-1}(y)$. We have
$F(x,f(x))=f\circ l\circ f^{-1}(f(x))=f(l(x))$, for all $x\in \dom(f)\subseteq X$. The result now follows
from Proposition \ref{Lemma1}.
\end{proof}

We may drop statements such as ``for all $(x,y)\in X\times Y$'' or ``for all $(x,y)\in\dom(f) \times Y$'' when the
context makes the full meaning clear.

\begin{proposition}
\label{lemma1b} Let $T:X\times Y\rightarrow X\times Y$ be of the form
\[
T(x,y)=(T_{1}(x),T_{2}(x,y))
\] 
with inverse $T^{-1}:X\times Y\rightarrow X\times Y$ of the form 
\[
(T_{1}^{-1} (x),T_{2}^{\ast}(x,y)), 
\]
for some surjective function $T_{2}^{\ast}:X\times Y\rightarrow Y$. If $w(x,y)=(l(x),F(x,y))$ is a fractel for $f:X\rightarrow Y$, and $T_{1}\circ l\circ T_{1}^{-1}:X\rightarrow X$ is contractive, then
$T\circ w\circ T^{-1}$ is a fractel for $\widetilde{f}:X\rightarrow Y$ defined
by%
\[
\widetilde{f}(x) := T_{2}(T_{1}^{-1}(x),f(T_{1}^{-1}(x))), \quad\forall\; x\in T_1(\dom(f))\subseteq X.
\]

\end{proposition}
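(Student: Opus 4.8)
The plan is to compute the composite $\wt{w} := T\circ w\circ T^{-1}$ explicitly, recognize it as a map of the fractel form $(x,y)\mapsto(\wt{l}(x),\wt{F}(x,y))$, and then verify the defining identity of Proposition \ref{Lemma1} for the pair $(\wt{w},\wt{f})$. Writing out the three compositions in order gives
\[
\wt{w}(x,y) = \bigl(T_1\circ l\circ T_1^{-1}(x),\; T_2\bigl(l(T_1^{-1}(x)),\,F(T_1^{-1}(x),\,T_2^{\ast}(x,y))\bigr)\bigr),
\]
so that $\wt{l} = T_1\circ l\circ T_1^{-1}$ and $\wt{F}(x,y) = T_2(l(T_1^{-1}(x)),F(T_1^{-1}(x),T_2^{\ast}(x,y)))$. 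The first coordinate depends on $x$ alone, so $\wt{w}$ has the required form \eqref{fractel1}; it is invertible as a composition of invertible maps (so the surjectivity of $T_2^{\ast}$ need not be invoked separately), and $\wt{l}$ is injective since $l$ is injective and $T_1$ is a bijection. The hypothesis that $\wt{l} = T_1\circ l\circ T_1^{-1}$ is contractive then guarantees $\wt{l}\neq\id_X$ on a nondegenerate $X$, placing $\wt{w}$ in the class of \emph{nontrivial} (indeed contractive-base) fractels relevant to the IFS constructions of Section \ref{IFSRB}.

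By Proposition \ref{Lemma1} it suffices to show $\wt{F}(x,\wt{f}(x)) = \wt{f}(\wt{l}(x))$ for every $x\in\dom(\wt{f}) = T_1(\dom(f))$. The crucial simplification is to evaluate $T_2^{\ast}$ along the graph of $\wt{f}$. Fix $x = T_1(u)$ with $u\in\dom(f)$; then $\wt{f}(x) = T_2(u,f(u))$ by definition, and therefore $(x,\wt{f}(x)) = T(u,f(u))$. Applying $T^{-1}$ and comparing coordinates yields $T^{-1}(x,\wt{f}(x)) = (u,f(u))$, whence $T_2^{\ast}(x,\wt{f}(x)) = f(u) = f(T_1^{-1}(x))$. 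This is the step I expect to be the main obstacle, since $T_2^{\ast}$ is given only abstractly (as a surjection) and is never available in closed form; without pinning down its value on the graph one cannot unravel the nested expression $\wt{F}(x,\wt{f}(x))$.

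With this identity in hand the verification is mechanical. Substituting gives $\wt{F}(x,\wt{f}(x)) = T_2(l(u),F(u,f(u)))$, and since $w$ is a fractel for $f$ the relation $F(u,f(u)) = f(l(u))$ of Proposition \ref{Lemma1} reduces this to $T_2(l(u),f(l(u)))$. On the other side, $l(u)\in\dom(f)$ because $w$ is a fractel and hence $l(\dom(f))\subseteq\dom(f)$; thus $\wt{l}(x) = T_1(l(u))\in T_1(\dom(f)) = \dom(\wt{f})$, so $\wt{f}(\wt{l}(x))$ is defined, and evaluating the defining formula for $\wt{f}$ at $\wt{l}(x)$ (using $T_1^{-1}(\wt{l}(x)) = l(u)$) gives $\wt{f}(\wt{l}(x)) = T_2(l(u),f(l(u)))$ as well. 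The two expressions agree, so Proposition \ref{Lemma1} applies and $\wt{w} = T\circ w\circ T^{-1}$ is a fractel for $\wt{f}$, completing the argument.
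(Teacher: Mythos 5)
Your proof is correct and is essentially the paper's argument: the explicit computation of $T\circ w\circ T^{-1}$ is identical, and your key identity $T_{2}^{\ast}(x,\widetilde{f}(x)) = f(T_{1}^{-1}(x))$ is exactly the pointwise form of the paper's observation that $T(G(f)) = G(\widetilde{f})$. The only difference is packaging --- the paper concludes set-theoretically that the conjugate maps $G(\widetilde{f})$ into itself because $w$ maps $G(f)$ into itself, whereas you verify the functional equation of Proposition \ref{Lemma1} directly --- and these two formulations are equivalent by that very proposition.
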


\begin{proof}
The function $T$ maps $G(f)$ into $T(G(f))=\{(T_{1}(x),T_{2}(x,f(x)) \mid x\in\dom(f)\}=\{(x,T_{2}(T_{1}^{-1}(x),f\left(  T_{1}^{-1}(x))\right)  \mid x\in T_1(\dom(f))\} = G(\widetilde{f})$. The function $w$ maps $G(f)$ into $G(f)$. Hence,
$T\circ w\circ T^{-1}$ maps $G(\widetilde{f})$ into $G(\widetilde{f})$. Moreover,
\begin{align*}
T\circ w\circ T^{-1}(x,y)  &  =(T\circ w)(T_{1}^{-1}(x),T_{2}^{\ast}(x,y))\\
&  =T(l(T_{1}^{-1}(x)),F(T_{1}^{-1}(x),T_{2}^{\ast}(x,y)))\\
&  =((T_{1}\circ l\circ T_{1}^{-1})(x),T_{2}(l(T_{1}^{-1}(x)),F(T_{1}%
^{-1}(x),T_{2}^{\ast}(x,y)))).\qedhere
\end{align*}
\end{proof}

Setting in Proposition \ref{lemma1b}, $X := \R^d$, $T_1(x) := A^{-1}(x - b)$, where $A\in \GL(d, \R)$ and $b\in \R^d$, and $T_2 (x,y) := y$, we obtain the following corollary.

\begin{corollary}
If $w = (l(x), F(x,y))$, $l\in \GL(d, \R)$ is a fractel for $f:\R^d\to Y$, then 
\[
(A^{-1}(l (Ax + b)-b), \,F(Ax + b,y))
\]
is a fractel for $f(A \mydot + b)$, where $I$ denotes the unit element of $\GL(d,\R)$.
\end{corollary}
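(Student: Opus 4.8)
The plan is to apply Proposition \ref{lemma1b} directly with the substitution announced just before the corollary, namely $X := \R^d$, $T_1(x) := A^{-1}(x-b)$, and $T_2(x,y) := y$. First I would record the pieces needed to invoke that proposition. Since $T_1$ is affine and invertible, its inverse is $T_1^{-1}(x) = Ax + b$. Because $T_2(x,y) = y$ projects onto the second coordinate and does not depend on $x$, the map $T(x,y) = (T_1(x), y)$ has inverse $T^{-1}(x,y) = (T_1^{-1}(x), y)$, so here $T_2^{\ast}(x,y) = y$, which is trivially surjective onto $Y$ as Proposition \ref{lemma1b} requires.

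Next I would verify the contractivity hypothesis and then read off the two components of the transformed fractel. The conjugated first-coordinate map is
\[
T_1 \circ l \circ T_1^{-1}(x) = A^{-1}\bigl(l(Ax + b) - b\bigr),
\]
and since $l \in \GL(d,\R)$ is a (nontrivial) fractel, Proposition \ref{prop2.4} guarantees $l$ has the eigenvalue structure forcing this conjugate to be contractive in the relevant direction, so the hypothesis of Proposition \ref{lemma1b} is met. Substituting $T_2^{\ast}(x,y) = y$ into the formula for $T \circ w \circ T^{-1}$ derived in the proof of Proposition \ref{lemma1b}, the second component becomes $T_2\bigl(l(T_1^{-1}(x)), F(T_1^{-1}(x), y)\bigr) = F(Ax + b, y)$, because $T_2$ simply returns its second argument. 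This yields exactly the claimed transformed fractel
\[
\bigl(A^{-1}(l(Ax+b) - b),\; F(Ax+b, y)\bigr).
\]

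Finally, I would identify the transformed function $\widetilde{f}$ produced by Proposition \ref{lemma1b} and confirm it equals $f(A\,\cdot\, + b)$. The proposition defines $\widetilde{f}(x) = T_2\bigl(T_1^{-1}(x), f(T_1^{-1}(x))\bigr)$; with $T_2(x,y) = y$ and $T_1^{-1}(x) = Ax + b$ this collapses to $\widetilde{f}(x) = f(Ax + b)$, as desired. The main obstacle, such as it is, is bookkeeping rather than mathematics: one must keep the roles of $T_1$, $T_1^{-1}$, and $A$ straight, since the corollary writes $l \in \GL(d,\R)$ and applies $l$ to the shifted argument $Ax + b$. I would take care to confirm that the domain statement of Proposition \ref{lemma1b}, $x \in T_1(\dom(f))$, translates correctly under $T_1(x) = A^{-1}(x-b)$ to the natural domain of $f(A\,\cdot\, + b)$. (The stray reference to the unit element $I$ in the statement plays no role and can be ignored.)
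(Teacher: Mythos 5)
Your proposal is correct and is exactly the argument the paper intends: the corollary is stated as an immediate consequence of Proposition \ref{lemma1b} under the substitution $T_1(x) = A^{-1}(x-b)$, $T_2(x,y) = y$, and you carry out precisely that substitution, correctly computing $T_1^{-1}(x) = Ax+b$, the conjugated map $T_1\circ l\circ T_1^{-1}$, and $\widetilde{f}(x) = f(Ax+b)$. The paper gives no further written proof, so your verification (including noting that the reference to $I$ is vestigial) matches and slightly elaborates the paper's approach.
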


The following Proposition \ref{lemma2} provides a means for constructing fractels for
non-invertible functions $f:X\rightarrow Y$.\ For functions $f,g:X\rightarrow
Y\subset\mathbb{C}^{m}$, $m\in \N$, we use the notation $\left(  f+g\right)  $ to denote
the function defined by $(f+g)(x)=f(x)+g(x)$ where vector addition is implied.
When the inverse of $(f+g)$ is well-defined, we denote this inverse by
$(f+g)^{-1}:Y\rightarrow X$. The technique referred to in Proposition \ref{lemma2}
was used in \cite{continuations} to prove that there exists an analytic IFS
whose attractor is the graph of a given analytic function, where an analytic IFS is
an IFS all of whose functions are analytic.

Throughout, we will state results concerning fractels for functions
$f:X\rightarrow Y\subset$ $\mathbb{C}^{m}$, where $m\in\N$. These results, and their proofs, apply analogously when $\mathbb{C}^{m}$ is replaced by $\mathbb{R}^{m}$.

\begin{proposition}\label{lemma2} 
If $f,g:X\rightarrow Y\subset$ $\mathbb{C}^{m}$ are such that $(f+g)$ is invertible, then a fractel for $f$ is $w:X\times Y\mathbb{\rightarrow}X\times Y$ defined by
\[
w(x,y) := (l(x),(f+g)\circ l\circ(f+g)^{-1}(g(x)+y)-g(l(x)))\text{.}%
\]
\end{proposition}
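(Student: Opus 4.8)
The plan is to invoke Proposition \ref{Lemma1}, which reduces the claim to verifying the single pointwise identity \eqref{fractel2}, namely $F(x,f(x))=f(l(x))$ for all $x\in\dom(f)$, where $F(x,y) := (f+g)\circ l\circ(f+g)^{-1}(g(x)+y)-g(l(x))$ is the second component of the proposed $w$. That $w$ has the required form $(l(x),F(x,y))$ is immediate from its definition, so the entire content lies in this one identity.

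First I would substitute $y=f(x)$ into $F$ and exploit the crucial cancellation: since $g(x)+f(x)=(f+g)(x)$, the inner term $(f+g)^{-1}(g(x)+f(x))$ equals $x$. Consequently $(f+g)\circ l\circ(f+g)^{-1}(g(x)+f(x)) = (f+g)(l(x)) = f(l(x))+g(l(x))$, and subtracting $g(l(x))$ leaves exactly $f(l(x))$. This establishes \eqref{fractel2}, so by Proposition \ref{Lemma1} the map $w$ is a fractel for $f$. Conceptually, I would also note that this is a conjugated form of Proposition \ref{lemma0}: writing $h := f+g$ (invertible by hypothesis), Proposition \ref{lemma0} yields the fractel $\widetilde w(x,y)=(l(x),h\circ l\circ h^{-1}(y))$ for $h$, and the change of variables $T(x,y):=(x,\,y-g(x))$ carries $G(h)$ onto $G(f)$ because $f=h-g$. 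Applying Proposition \ref{lemma1b} with $T_1=\id$ and $T_2(x,y)=y-g(x)$ then produces precisely $T\circ\widetilde w\circ T^{-1}=w$, giving both a structural explanation and an independent derivation.

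The one point requiring care is the well-definedness of the composition in $F$: for an arbitrary $y\in Y$ the quantity $g(x)+y$ need not lie in $\range(f+g)$, so $(f+g)^{-1}(g(x)+y)$ may be undefined off the graph. However, Proposition \ref{Lemma1} only tests points with $y=f(x)$, where $g(x)+y=(f+g)(x)\in\range(f+g)$, so the composition is defined exactly where it is needed; I would state this explicitly. The invertibility and continuity of $w$ demanded by the definition of fractel follow from those of $l$ and $f+g$ together with the continuous shift by $g$, and I would remark on this briefly rather than belabor it. I do not anticipate a genuine obstacle here: the whole argument rests on the elementary cancellation $(f+g)^{-1}\bigl((f+g)(x)\bigr)=x$.
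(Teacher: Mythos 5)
Your proof is correct. Your primary argument---substituting $y=f(x)$ into $F$ and using the cancellation $(f+g)^{-1}\bigl((f+g)(x)\bigr)=x$ to verify \eqref{fractel2} directly via Proposition \ref{Lemma1}---is more elementary than the paper's, which instead proceeds structurally: it takes the fractel $\widetilde w(x,y)=(l(x),(f+g)\circ l\circ (f+g)^{-1}(y))$ for $f+g$ supplied by Proposition \ref{lemma0} and conjugates it by the shear $T(x,y)=(x,g(x)+y)$, which carries $G(f)$ onto $G(f+g)$, in the manner of Proposition \ref{lemma1b}. Your secondary remark reproduces exactly this conjugation argument (your $T$ is the inverse of the paper's, so your $T\circ\widetilde w\circ T^{-1}$ is the paper's $T^{-1}\circ\widetilde w\circ T$; the resulting map is identical), so in effect you have given both proofs. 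The direct computation buys transparency and sidesteps any need to check the hypotheses of Proposition \ref{lemma1b}; the conjugation explains where the formula comes from and situates the result within the paper's general transfer principle for fractels. Your observation that $(f+g)^{-1}(g(x)+y)$ need only be defined when $y=f(x)$, i.e.\ on the graph of $f$ where $g(x)+y\in\range(f+g)$, is a legitimate point of care that the paper passes over in silence.
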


\begin{proof}
By Proposition \ref{lemma0}, a fractel for $(f+g)$ is defined by 
\[
\widetilde{w}(x,y) = (l(x),(f+g)\circ l\circ(f+g)^{-1}(y)). 
\]
Define $T:X\times\mathbb{R\rightarrow}X\times\mathbb{R}$ by $T(x,y)=(x,g(x)+y)$, so that
$T(G(f))=G(f+g)$. Notice that $T$ is injective, with inverse defined by
$T^{-1}(x,y)=(x,-g(x)+y)$, for all $x$ and $y$. It follows, as in Proposition
\ref{lemma1b}, that a fractel for $f$ is $w=T^{-1}\circ \widetilde{w}\circ T$, which
evaluates to the expression in the statement of the Proposition.
\end{proof}

The set of functions $f:X\rightarrow\mathbb{C}$ comprises a commutative ring with unity $1\neq 0$ over
$\mathbb{C}$, under the usual operations. The following constructions describe
how operations of addition and multiplication on this ring are related to
operations on corresponding fractels (assuming they exist). These
results are useful for constructing fractels. Similar constructions
apply, with care, also to vector-valued functions $f:X\rightarrow\mathbb{C}^{m}$.

\begin{proposition}\label{prop1} Let $f_{1},f_{2}:X\rightarrow\mathbb{C}$. Let $w_{i}:X\times
Y\rightarrow X\times Y$, where $w_{i}(x,y)=(l(x),F_{i}(x,y))$ is a fractel
for $f_{i}$, $i=1,2.$
\begin{enumerate}
\item[(i)] A fractel for $f_{3} := f_{1}+f_{2}$ is $(l(x),F_{3}(x,y))$ where
\[
F_{3}(x,y) = F_{1}(x,y-f_{2}(x))+F_{2}(x,y-f_{1}(x))\text{.}%
\]
\item[(ii)] A fractel for $f_{4} := af_{1}$, where $a\in\mathbb{R}\backslash\{0\}$, is
$(l(x),F_{4}(x,y))$ where%
\[
F_{4}(x,y) = aF_{1}(x,y/a)\text{.}%
\]
\item[(iii)] If $\dom(f_1) \cap \dom(f_2) \neq\emptyset$ and if $f_{1}\cdot f_{2}\neq0$ on $\dom(f_1) \cap \dom(f_2)$, then a fractel for
$f_{5} := f_{1}\cdot f_{2}$ is $(l(x),F_{5}(x,y))$ where%
\[
F_{5}(x,y) = F_{1}\left(x,\frac{y}{f_{2}(x)}\right)\cdot F_{2}\left(x,\frac{y}{f_{1}(x)}\right)\text{.}%
\]
\end{enumerate}
\end{proposition}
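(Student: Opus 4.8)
The plan is to verify each of the three claims directly from the characterization in Proposition \ref{Lemma1}: a pair $(l,F)$ is a fractel for a function $h$ precisely when $F(x,h(x)) = h(l(x))$ for all $x \in \dom(h)$. Since the two given fractels $w_1 = (l,F_1)$ and $w_2 = (l,F_2)$ share the \emph{same} first component $l$, the only thing to check in each case is that the proposed second component $F_j$ satisfies this self-referential identity when evaluated along the graph of the relevant $f_j$. Throughout, the two available facts are the fractel relations $F_1(x,f_1(x)) = f_1(l(x))$ and $F_2(x,f_2(x)) = f_2(l(x))$, valid on $\dom(f_1)$ and $\dom(f_2)$ respectively.

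For part (i), I would substitute $y = f_3(x) = f_1(x)+f_2(x)$ into $F_3$. The design of $F_3$ is exactly so that the shifts $y - f_2(x)$ and $y - f_1(x)$ collapse to $f_1(x)$ and $f_2(x)$, giving $F_3(x,f_3(x)) = F_1(x,f_1(x)) + F_2(x,f_2(x)) = f_1(l(x)) + f_2(l(x)) = f_3(l(x))$. Part (ii) is the same idea for scalar multiplication: evaluating $F_4$ at $y = a f_1(x)$ cancels the factor $a$ inside $F_1$, so $F_4(x, a f_1(x)) = a F_1(x,f_1(x)) = a f_1(l(x)) = f_4(l(x))$, where $a \neq 0$ is what makes the rescaling $y/a$ legitimate. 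For part (iii), evaluating $F_5$ at $y = f_1(x) f_2(x)$ makes the arguments $y/f_2(x)$ and $y/f_1(x)$ reduce to $f_1(x)$ and $f_2(x)$, whence $F_5(x,f_5(x)) = F_1(x,f_1(x)) \cdot F_2(x,f_2(x)) = f_1(l(x)) f_2(l(x)) = f_5(l(x))$. Each verification is a single line, and Proposition \ref{Lemma1} then delivers the conclusion.

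The point requiring care is the bookkeeping of domains rather than any analytic difficulty. For the sum and the product I would take the common domain to be $\dom(f_1) \cap \dom(f_2)$, so that both fractel identities for $f_1$ and $f_2$ are simultaneously available at each $x$; the hypothesis $\dom(f_1) \cap \dom(f_2) \neq \emptyset$ in (iii) guarantees this intersection is nonempty, and the nonvanishing hypothesis $f_1 \cdot f_2 \neq 0$ on $\dom(f_1) \cap \dom(f_2)$ is precisely what is needed for the divisions $y/f_1(x)$ and $y/f_2(x)$ defining $F_5$ to be meaningful along the graph. I would also note, as a formality, that $w = (l,F_j)$ genuinely has the block form of the fractel definition with $l$ inherited unchanged from $w_1$ and $w_2$, and that each $F_j$ is assembled from $F_1,F_2$ by the continuous ring operations on $\C$; so no new invertibility issue arises beyond what $w_1,w_2$ already supply, and the entire substance of the argument is contained in the three evaluations above.
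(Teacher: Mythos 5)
Your proposal is correct and follows exactly the paper's own argument: verify $F_j(x,f_j(x)) = f_j(l(x))$ by direct substitution and invoke Proposition \ref{Lemma1}, with the same one-line computations in each of the three cases. The additional remarks on domains and on why the nonvanishing hypothesis makes the divisions in (iii) legitimate are sensible bookkeeping that the paper leaves implicit.
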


\begin{proof}
These results follow at once from Proposition \ref{Lemma1}. \\ \noindent
Ad (i): We have
\begin{align*}
F_{3}(x,f_{3}(x))  &  =F_{1}(x,f_{3}(x)-f_{2}(x))+F_{2}(x,f_{3}(x)-f_{1}(x))\\
&  =F_{1}(x,f_{1}(x))+F_{2}(x,f_{2}(x))\\
&  =f_{1}(l(x))+f_{2}(l(x))=f_{3}(l(x))\text{.}%
\end{align*}

Ad (ii): Similarly%
\[
F_{4}(x,f_{4}(x))=aF_{1}(x,af_{1}(x)/a)=aF_{1}(x,f_{1}(x))=af_{1}%
(l(x))=f_{4}(l(x)).
\]

Ad (iii): Again a direct computation yields
\begin{align*}
F_{5}(x,f_{5}(x))  &  =F_{1}\left(x,\frac{f_{5}(x)}{f_{2}(x)}\right) F_{2}\left(x,\frac
{f_{5}(x)}{f_{1}(x)}\right) = F_{1}(x,f_{1}(x))F_{2}(x,f_{2}(x))\\
&  =f_{1}(l(x))f_{2}(l(x)) = f_{5}(l(x)).\qedhere
\end{align*}
\end{proof}

\begin{example}
If $\mathfrak{R}[x]$ denotes the commutative ring of polynomials $p:[0,1]\rightarrow
\mathbb{R}$, then $\mathfrak{R}[x]$ is generated by $\{1,x\}$. For instance, a fractel for $1$
is $(\frac{x}{2}, 1)$ and a fractel for $x$ is $(\frac{x}2, \frac{y}2)$. By using the
operations in Proposition \ref{prop1}, we can construct a fractel for each
element of $\mathfrak{R}[x]$.
\end{example}

\begin{example}
On the interval $[0,1]$, a fractel for $f_{1}(x)=1$ is $(\frac{x}2,1)$ and a fractel for
$f_{2}(x)=7x$ is $(\frac{x}2, \frac{y}2)$. Here $F_{1}(x,y)=1$ and $F_{2}(x,y)=\frac{y}2$. By
Proposition \ref{prop1}, a fractel on $[0,1]$ for $f(x)=10+x=10\cdot f_{1}(x)+\frac{1}%
{7}\cdot f_{2}(x)$, so that $a_{1}=10$ and $a_{2}=\frac{1}{7}$, is
\[
w(x,y)=\left(\tfrac{x}2, 5+\tfrac{y}2\right)\text{.}%
\]
\end{example}
\subsection{Fractels, cartesian products, and function compositions}
In this short section, we consider fractels for cartesian products and compositions of functions. First, we recall that the cartesian product of two functions $f_1: \dom(f_1) \subseteq X_1 \to Y_1$ and $f_2: \dom(f_2) \subseteq X_2 \to Y_2$ is defined to be the function 
\begin{gather*}
f_1\times f_2: \dom(f_1)\times\dom(f_2) \subseteq X_1\times X_2 \to Y_1\times Y_2,\\ 
f_1\times f_2 (x_1, x_2) := (f_1(x_1), f_2(x_2)).
\end{gather*}

Suppose that for $i = 1,2$, $X_i$ and $Y_i$ are complete metric spaces and $f_i: \dom(f_i)\subseteq X_i\to Y_i$ are functions with fractels $(l_i, F_i)$, where $l_i : X_i\to X_i$ and $F_i : X_i\times Y_i\to Y_i$. Then a fractel for $f_1\times f_2: \dom(f_1)\times\dom(f_2) \subseteq X_1\times X_2\to Y_1\times Y_2$ is given by $(l_1\times l_2, F_1\times F_2)$:
\begin{align*}
f_1\times f_2 (l_1\times l_2 (x_1,x_2)) &= (f_1(l_1(x_1)),f_2(l_2(x_2))) = (F_1(x_1,f_1(x_1)),F_2(x_2,f_2(x_2)))\\
&= F_1\times F_2 ((x_1,f_1(x_1)),(x_2,f_2(x_2))),
\end{align*}
for all $(x_1,x_2)\in \dom(f_1)\times\dom(f_2)$.

In the case that $X_1 = X_2 =: X$, we identify $\diag (X\times X)$ with $X$ and consider $f_{1}: \dom(f_1)\subseteq X\rightarrow Y_1$ and $f_{2}: \dom(f_2)\subseteq X\rightarrow Y_2$. Assume that $(l(x),F_{1}(x,y_1))$ and $(l(x),F_{2}(x,y_2))$ are fractels for $f_{1}$ and $f_{2}$, respectively. Then, a fractel for $f_1\times f_2:X\to Y_1\times Y_2$, $x\mapsto (f_1(x),f_2(x))$, is given by $(l(x),F_{1}(x,y_1)\times F_{2}(x,y_2))$, where
\[
F_{1}(x,y_1)\times F_{2}(x,y_2) := (F_1(x,y_1),F_2(x,y_2)), \quad\forall\,x\in X, \,y_1, y_2\in Y.
\]
Indeed, 
\begin{align*}
f_1\times f_2 (l(x)) &= (f_1(l(x)),f_2(l(x))) = (F_1(x,f_1(x))), F_2(x,f_2(x)))\\
& = F_{1}(x,f_1(x))\times F_{2}(x,f_2(x)),\qquad \forall\,x\in \dom(f_1) \cap \dom(f_2).
\end{align*}

As far as function compositions are concerned, we have the following result. Given that $f_1:X\to Y$ and $f_2: Y \to Z$ are functions such that $\range (f) = \dom (g)$ and which have fractels $(l_1,F_1)$ and $(l_2,F_2)$, then a fractel for $f_2\circ f_1:X\to Z$ is given by $(l_1, F_2)$ provided that $f_1(l_1(x)) = F_1(x, f_1(x)) = l_2(x)$. This follows directly from the application of Proposition \ref{Lemma1}.

\subsection{Calculus of fractels}
In some cases it is possible to derive fractels for integrals and derivatives
of functions from fractels for functions. In these cases, we solve such
problems as: ``Given $w(x,y)$ is a fractel for $f:[0,1]\rightarrow\mathbb{R}$,
find fractels for (i) $g(x)=\int\limits_{0}^{x}f(t)dt$ and (ii) $h(x)=\frac
{df}{dx}(x)=f^{\prime}(x)$.'' This approach generalizes a basic idea, exploited in
\cite{finterp2}, concerning the calculus of fractal interpolation functions.
Propositions \ref{lemma3} and \ref{lemma4} are illustrative; further results,
in the context of higher dimensions, with more elaborate dependencies on $x$
and $y$, may be obtained.

\begin{proposition}
\label{lemma3} Let both $f:[0,1]\rightarrow\mathbb{R}$ and $g:[0,1]\rightarrow
\mathbb{R}$ be differentiable. Let $l(x)=sx$, for some $s\in (0,1)$. If
$w(x,y)=(sx,g(x)+cy)$ is a fractel for $f:[0,1]\rightarrow\mathbb{R}$, for
some $c\in\mathbb{R}$, then a fractel for $f^{\prime}:[0,1]\rightarrow
\mathbb{R}$ is $w=(sx,F(x,y))$ where%
\[
F(x,y)=s^{-1}g^{\prime}(x)+s^{-1}cy\text{.}%
\]

\end{proposition}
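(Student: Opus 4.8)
The plan is to reduce the claim to the fractel characterization of Proposition \ref{Lemma1} and then differentiate. First I would use Proposition \ref{Lemma1} to unpack the hypothesis that $w(x,y) = (sx, g(x) + cy)$ is a fractel for $f$: since here $l(x) = sx$ and the second component is $F(x,y) = g(x) + cy$, being a fractel is precisely the statement that the functional equation
\[
f(sx) = g(x) + c\,f(x)
\]
holds for every $x \in \dom(f) = [0,1]$.

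The key step is to differentiate this identity in $x$. Both sides are differentiable on $[0,1]$ by hypothesis—the left side by the chain rule, being the composition of the differentiable $f$ with the smooth map $x \mapsto sx$. Differentiating yields
\[
s\,f'(sx) = g'(x) + c\,f'(x),
\]
and hence, dividing by $s \in (0,1)$,
\[
f'(sx) = s^{-1}g'(x) + s^{-1}c\,f'(x).
\]

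Finally I would read the right-hand side as $F(x, f'(x))$ with $F(x,y) = s^{-1}g'(x) + s^{-1}cy$, so that the displayed equation becomes $F(x, f'(x)) = f'(sx) = f'(l(x))$. Invoking Proposition \ref{Lemma1} once more, now with $f'$ in place of $f$ and the same map $l(x) = sx$, this is exactly the condition for $(sx, F(x,y))$ to be a fractel for $f'$, which is the desired conclusion.

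There is no deep obstacle here; the argument is essentially a one-line chain-rule computation dressed in the fractel formalism. The only point requiring a little care is the legitimacy of differentiating the functional identity across the entire closed interval $[0,1]$—in particular the chain rule for $f\circ l$ at the endpoints, understood via one-sided derivatives—but this is fully guaranteed by the standing assumption that $f$ and $g$ are differentiable on $[0,1]$.
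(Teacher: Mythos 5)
Your proof is correct and follows exactly the paper's argument: translate the fractel hypothesis into the functional equation $f(sx)=g(x)+cf(x)$ via Proposition \ref{Lemma1}, differentiate, divide by $s$, and apply Proposition \ref{Lemma1} again. Your remark about one-sided derivatives at the endpoints is a small extra care the paper sidesteps by stating the identity only for $x\in(0,1)$, but the substance is identical.
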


\begin{proof}
Eqn. \eqref{fractel2} implies $f(sx)=g(x)+cf(x)$, from which it follows that
\begin{align*}
f^{\prime}(sx)  &  =s^{-1}g^{\prime}(x)+s^{-1}cf^{\prime}(x) =F(x,f^{\prime}(x)),
\end{align*}
for all $x\in(0,1)$. The result now follows from Proposition \ref{Lemma1}.
\end{proof}

\begin{example}
A fractel for $f(x)=x^{3}$ is $(\frac{x}{2},\frac{y}{8})$. It follows from
Proposition \ref{lemma3} that a fractel for $f^{\prime}(x)=3x^{2}$ is $(\frac{x}%
{2},\frac{y}{4})$.
\end{example}

The generalization of Proposition \ref{lemma3} to the case where $f,g\in C^n[0,1]$ and $L := {\sum\limits_{i=1}^n}\, a_i(x) D^i$ is a linear differential operator with $a_i \in C[0,1]$ is at hand.

We can similarly obtain fractels for $\int\limits_{0}^{x}f(t)dt$ from fractels
for $f(x)$, in some cases. See also \cite{finterp2}.

\begin{proposition}
\label{lemma4} Let both $f:[0,1]\rightarrow\mathbb{R}$ and $g:[0,1]\rightarrow
\mathbb{R}$ be Riemann integrable. Let $l(x)=sx$, for some $s\in (0,1)$. If
$w(x,y)=(sx,g(x)+cy)$ is a fractel for $f:[0,1]\rightarrow\mathbb{R}$, for
some $c\in\mathbb{R}$, then a fractel for $\int\limits_{0}^{x}%
f(t)dt:[0,1]\rightarrow\mathbb{R}$ is $w=(sx,F(x,y))$ where%
\[
F(x,y):=s\int\limits_{0}^{x}g(t)dt+scy\text{.}%
\]
\end{proposition}

\begin{proof}
From $f(s x)=g(x)+cf(x)$, it follows that
\begin{align*}
\int\limits_{0}^{sx}f(t)dt = s \int\limits_{0}^{x}f(st)dt = s\int\limits_{0}^{x}g(t)dt+s c \int
\limits_{0}^{x}f(t)dt,
%
%
\end{align*}
for all $x\in(0,1)$. The result now follows from Proposition \ref{Lemma1}.
\end{proof}

\begin{example}
A fractel for $f(x)=x^{3}$ is $(\frac{x}{2},\frac{y}{8})$. It follows from
Proposition \ref{lemma4} that a fractel for $\int\limits_{0}^{x}f(t)dt=\frac{x^{4}%
}{4}$ is $(\frac{x}{2},\frac{y}{16})$.
\end{example}

Using Cauchy's formula for repeated integration, one can extend Proposition \ref{lemma4} to fractional integrals. Suppose $f, g\in L^1 (0,b)$, for some $0<b\leq\infty$. Then, for $\alpha > 0$, the $\alpha$-fractional integral of $f$, $J^\alpha f$, is defined by
\be\label{J}
J^\alpha f (x) := \frac{1}{\Gamma (\alpha)}\,\int_0^x (x - t)^{\alpha -1} f(t) dt, \quad x\leq b.
\ee
Under the above hypotheses on $f$ and $g$, if $w(x,y)=(sx,g(x)+cy)$ is a fractel for $f:[0,1]\rightarrow\mathbb{R}$, then an argument similar to the one employed in the proof of Proposition \ref{lemma4}, yields 
\[
w(x,y) = (s x, s^\alpha\,J^\alpha g (x) + s^\alpha\,c y)
\]
as a fractel for $J^\alpha f$.

Note that for $\alpha := \frac12$, \eqref{J} is related to the Abel integral equation.

\section{\label{sec:1D} Fractels and IFSs for real functions}

In this section, we develop fractels and from them local IFSs for functions $f: \Omega \rightarrow \R$
for some closed interval $\Omega \subset \R$. The fractels are found using Proposition \ref{Lemma1} which states that 
$w(x,y)=(l(x),F(x,y))$ is a fractel of a function $f$ if and only if $$f(l(x)) = F(x,f(x)).$$
These fractels will then be used to establish fractal approximations of $f$. 

We consider the case of fractels which have a domain $\dom w = \dom l \times \R$ where the domain of $l$ is an interval 
$[a,b]$. The function $l$ is chosen as 
$$l(x) = \frac{x + \tau}{2}$$
for some $\tau\in[a,b]$ with typical values $\tau=a$, $\tau=b$ or $\tau=(a+b)/2$. The function $F(x,y)$ is chosen as
$$F(x,y) = \sigma y + (1-\sigma)G(x)$$
for some $0\leq \sigma < 1$ and some continuous function $G$.
Approximations are obtained by approximating $G(x)$; here we will consider constant approximations $G(x)\approx \gamma$. 
The functions 
$$w(x,y) = ((x+\tau)/2, \sigma y + (1-\sigma)\gamma)$$ 
are fractels for functions $h$ of the form
$$h(x) = \alpha (x-\tau)^\theta + \gamma$$
where $2^{-\theta} = \sigma$. This can be shown by applying Proposition \ref{Lemma1}.

We can now generate an approximation of a function $f(x)$ by first constructing its fractels (and local IFS) of the form
$w(x,y) = ((x+\tau)/2, \sigma y + (1-\sigma) G(x))$ and then obtaining an approximation by approximating $G(x)$ by a constant
$\gamma$ which we suggest can be done in three ways (among many others of course):
\begin{itemize}
  \item by the mean $$\gamma = \frac{1}{b-a}\int_a^b G(x)\, dx;$$
  \item by the midpoint $$\gamma = G((a+b)/2);$$
  \item or by the trapezoidal rule for the mean $$\gamma = (G(a)+G(b))/2.$$
\end{itemize}
Using a bound on the errors $G(x)-\gamma$ for all the fractels of an IFS one can obtain a bound for the approximation error. The argument is based on the following observation. Suppose $k$ and $\wt{k}$ satisfy self-referential equations of the form \eqref{eq2.8} where $F$ is given by \eqref{specialv}:
\begin{gather*}
k(x) = \sum_{n=1}^N \lambda_n(l_n^{-1}(x)) \chi_{l_n(\Omega)} + \sum_{n=1}^N s_n k (l_n^{-1}(x)) \chi_{l_n(\Omega)}, \\
\wt{k}(x) = \sum_{n=1}^N \wt{\lambda}_n(l_n^{-1}(x)) \chi_{l_n(\Omega)} + \sum_{n=1}^N s_n \wt{k} (l_n^{-1}(x)) \chi_{l_n(\Omega)},
\end{gather*}
with $s_n\in (-1,1)$. Using arguments similar to those given in \cite[5.11]{massopust}, one shows that
\be\label{eq5.1}
\|k - \wt{k}\|_\infty \leq \frac{\max\limits_{n = 1, \ldots, N} \|\lambda_n - \wt{\lambda}_n\|_\infty}{1-s},
\ee
where $s:= \max\limits_{n = 1, \ldots, N} |s_n|$.

Now suppose that $f:\Omega\to \R$ has fractel $w(x,y) = ((x+\tau)/2, \sigma y + \lambda(x))$, where $\lambda$ is given by $(1-\sigma)G$. Further suppose that $\wt{\lambda} = (1-\sigma)\gamma$ is an approximation for $\lambda$. Denote by $\wt{f}$ the function with fractel $\wt{w}(x,y) = ((x+\tau)/2, \sigma y + \wt{\lambda}(x)) = ((x+\tau)/2, \sigma y + (1 - \sigma) \gamma))$. Then, by the above observation, we have
\[
\|f - \wt{f}\|_\infty \leq \|G - \gamma\|_\infty.
\]

We have the following proposition.
\begin{proposition}
   \label{prop1D}
   A function $f$ defined by
    $$f(x) = \alpha (x-\tau)^\theta + g(x),$$
   for $x\in[a,b]$ with $\theta >0$ and $\tau\in[a,b]$, admits a fractel $w$ with
    $$w(x,y) = ((x+\tau)/2, \sigma y + (1-\sigma) G(x)),$$
   where $\sigma = 2^{-\theta}$ and 
    $$G(x) = \frac{g((x+\tau)/2)-\sigma g(x)}{1-\sigma}.$$
\end{proposition}
\begin{proof}
  This is a direct application of Proposition \ref{Lemma1}.
\end{proof}

There are now some special cases:
\begin{itemize}
  \item if $g(x)=\gamma$ then $G(x) = \gamma$, independent of $\theta$,
  \item if $\theta=1$ and $g(x)$ is a polynomial of degree one, then $G(x)=g(\tau)$,
  \item if $\theta=2$ and $g(x)$ is a polynomial of degree one, then $G(x) = g((x+2\tau)/3)$.
  \item if $\alpha=0$ then $f(x)=g(x)$ and if we now choose $\sigma=0$ one gets
    $$G(x) = f((x+\tau)/2).$$
    This is the case which recovers standard approximations and the IFS only is used to control the evaluation.
\end{itemize}

We now illustrate the above discussion for a simple example by developing a full local IFS and a corresponding
approximation.

\begin{example}
    We consider the function
    $$f(x) = \sqrt{x}, \quad x\in [0,1].$$
    By Proposition \ref{prop1D} this function admits a fractel 
    $$w_1(x,y) = (x/2,\, y/\sqrt{2})$$
    with domain $[0,1]\times\R$. This fractel recovers the function $f(x)$ for $x\in[0,\frac12]$ from the full function.

    We now need to recover the function for $x\in(\frac12,1]$. For this we introduce two fractels defined over the
    domain $[\frac12,1]\times \R$. First we choose the components $l_i$ as
    $$l_i(x) = (x + \tau_i)/2$$ 
    where $\tau_i = (i-1)/2$. Then we rewrite our fractel as
    $$f(x) = \alpha_i(x-\tau_i)^{\theta_i} + g_i(x)$$
    thus
    $$g_i(x) = \sqrt{x} - \alpha_i(x-\tau_i)^{\theta_i},$$
    we are free to choose the $\theta_i > 0$ and we set as above $\sigma_i = 2^{-\theta_i}$.
  
    We now use Proposition \ref{prop1D} to get 
    \begin{align*}
      G_i(x) & = \frac{g_i((x+\tau_i)/2)-\sigma_i g_i(x)}{1-\sigma_i}\\
             & = \frac{\sqrt{(x+\tau_i)/2} - \sigma_i \sqrt{x} -\alpha_i\left(
                         \sigma_i (x-\tau_i)^\theta_i - \sigma_i (x-\tau_i)^{\theta_i}\right)}{1-\sigma_i}
    \end{align*}
    and thus
    $$G_i(x) = \frac{\sqrt{(x+\tau_i)/2} - \sigma_i \sqrt{x}}{1-\sigma_i}.$$

    In summary, this leads to a local IFS defined by
    \begin{align}
       w_1(x,y) & = (x/2, y/\sqrt{2}), \quad x \in [0,1], \\
       w_2(x,y) & = ((2x+1)/4, \sigma_2 y + \sqrt{2x+1}/2 - \sigma_2 \sqrt{x}), \quad x\in[\tfrac12,1], \\
       w_3(x,y) & = ((x+1)/2, \sigma_3 y  + \sqrt{x+1}/\sqrt{2} - \sigma_3 \sqrt{x}), \quad x\in[\tfrac12,1].
    \end{align}

    In this case the function $f(x)$ is smooth for $x\in[\frac12,1]$ and one typically would choose $\theta_i \geq 1$ and
    thus $\sigma_i \in [0,\frac12]$. 

    We then get an approximation by the midpoint rule to be for $\sigma_i=\frac12$:
    \begin{align}
       w_1(x,y) & = (x/2, y/\sqrt{2}), \quad x \in [0,1], \\
       w_2(x,y) & = ((2x+1)/4, (y + \sqrt{5/2} -  \sqrt{3/2})/2, \quad x\in[\tfrac12,1], \\
       w_3(x,y) & = ((x+1)/2,  (y + \sqrt{5} - \sqrt{3/2})/2, \quad x\in[\tfrac12,1].
    \end{align}

    \begin{figure}
        \centerline{\includegraphics[width = 0.9\textwidth]{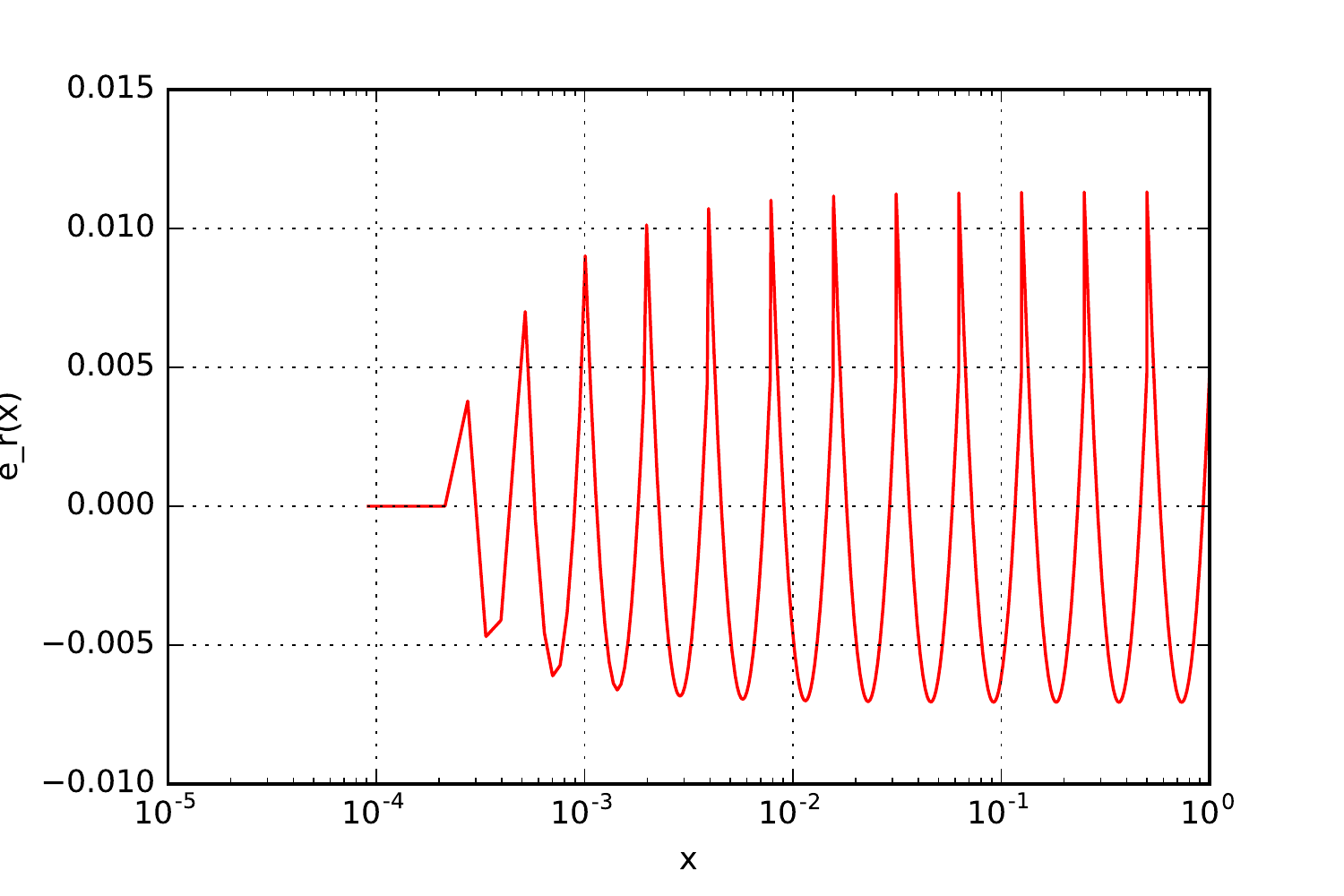}}
        \caption{\label{fig:relerr} 
            The relative error of the fractal approximation of $f(x) = \sqrt{x}$ based on the midpoint formula.}
    \end{figure}
    In Figure \ref{fig:relerr} we have plotted the relative error $e(x) = f_F(x)/f(x) - 1.0$ where $f(x)=\sqrt{x}$
    and $f_F(x)$ is the fractal approximation obtained from the midpoint formula above. One can see that the relative
    error is stable even for $x$ close to zero. One can show that the approximation is actually a piecewise linear
    approximation for a grid which is refined at $x=0$.

\end{example}

In this section we have considered fractels and corresponding local IFSs based on a particular choice of the functions
$F(x,y) = \sigma y + G(x)$. We have seen that for any $\sigma\in[0,1)$ we can generate a fractel. Of course it makes
sense to choose the $\sigma$ such that it reflects the main (local) behaviour of the function. For a local IFS it is
important to choose the domain carefully. This needs to be further discussed. Here we have provided an example. Future
work would include the discussion of fractels of the form $\Phi(y) + G(x)$ and how to choose appropriate $\Phi(y)$. 
One can see again that $\Phi(y)$ may in principle be chosen arbitrarily, but like in this section, we suggest to take
into account approximation techniques when choosing $\Phi(y)$. For example, one might choose $\sigma$ and $\Phi$ such
that a constant approximation of the corresponding $G(x)$ gives a good approximation. 

\section{Affine Fractels For Vector Spaces of Functions
$u:[0,1] \rightarrow \R$}\label{affine-fractels-for-vector-spaces-of-functions-u01-rightarrow-r}
In the following, let $V_n$ denote an $m$-dimensional linear space of functions $u: [0,1]\rightarrow \R$. Furthermore, let 
\[
\cS=\{l  \mid  l(x) = \sigma x+\tau,\; l([0,1]) \subset [0,1]\}
\] 
be the set of affine maps which leave the interval $[0,1]$ invariant. One can show that $\cS$ forms a semigroup with respect to composition. As the affine maps are fractels for the interval $[0,1]$, we call $\cS$ a
fractel semigroup.

\begin{definition}
We refer to
\[
\cS_{V_n} := \{l\in \cS  \mid  u\circ l \in V_n,\; \text{for all $u\in V_n$}\}.
\]
as the \emph{fractel semigroup} of $V_n$.
\end{definition}

We thus have for every $l\in \cS_{V_n}$ 
\[
l^*(V_n) \subset V_n,
\] 
where $l^*$ is the pullback of $l$, i.e., $l^*(u) = u\circ l$. As the fractel semigroup always contains the identity mapping $l(x) = x$, one has that
\[
V_n = \bigcup_{l\in \cS_{V_n}} l^*(V_n).
\]

If $V_n$ is the set of polynomials of degree at most $n-1$ then $\cS_{V_n}=\cS$. If $V_n$ is the space of continuous piecewise linear functions which are linear on $[0,\frac12]$ and $[\frac12,1]$ then $\cS_{V_n}$ contains all $l\in\cS$ which satisfy one of the three following conditions: 
\begin{enumerate}
\item $l([0,1]) \subset [0,\frac12]$,
\item $l([0,1]) \subset [\frac12,1]$, or 
\item $l(\frac12)=\frac12$,
\end{enumerate}
as in the first two cases $l^*(u)$ are first degree polynomials and in the third case the corner is invariant at $x=\frac12$. In this case one has $l(x) = \sigma x + \frac12\,(1-\sigma)$ and only in this case one has $l^*(V_n)=V_n$ as in the other two cases $l^*(V_n)$ is the set of affine functions which is a subset of $V_n$.

The following proposition is a direct consequence of the definitions.

\begin{proposition}
Let $f_1,\ldots,f_n$ be basis functions of $V_n$ and $f=(f_1,\ldots,f_n)^T$. Let $l\in \cS_{V_n}$ be an element of the affine fractel semigroup of $V_n$. Then
  \begin{itemize}
    \item there exists a matrix $M_l \in \R^{n\times n}$ such that
       \[
       f\circ l = M_l f.
       \]
    \item If $l^*(V_n) = V_n$ then $M_l$ is invertible.
  \end{itemize} 
\end{proposition}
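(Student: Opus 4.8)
The plan is to exploit the definition of $\cS_{V_n}$ together with the linear independence of the chosen basis, reducing both assertions to elementary linear algebra on the finite-dimensional space $V_n$. The guiding observation is that the pullback $l^*$ restricts to a linear endomorphism of $V_n$, and that $M_l$ is precisely the matrix representing this endomorphism in the basis $f_1,\dots,f_n$.

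For the first bullet, I would fix $l\in\cS_{V_n}$ and use the very definition of the fractel semigroup: $u\circ l\in V_n$ for every $u\in V_n$, and in particular $f_i\circ l\in V_n$ for each basis function $f_i$. Since $\{f_1,\dots,f_n\}$ spans $V_n$, each $f_i\circ l$ admits an expansion $f_i\circ l=\sum_{j=1}^n (M_l)_{ij}\,f_j$, with the coefficients $(M_l)_{ij}$ uniquely determined by the linear independence of the $f_j$. Assembling these coefficients into the matrix $M_l=\big((M_l)_{ij}\big)\in\R^{n\times n}$ and stacking the $n$ scalar identities into a single vector identity yields exactly $f\circ l=M_l f$.

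For the second bullet, I would first note that $l^*:V_n\to V_n$, $u\mapsto u\circ l$, is a well-defined linear operator precisely because $l\in\cS_{V_n}$, and that the computation above exhibits $M_l$ (up to the harmless transpose arising from passing to coordinate vectors) as the matrix of $l^*$. The hypothesis $l^*(V_n)=V_n$ then says that this operator is surjective; since $V_n$ is finite-dimensional, a surjective endomorphism is automatically injective, hence bijective, and consequently its matrix $M_l$ is invertible.

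The hard part is essentially nonexistent: both statements follow at once from recognizing $M_l$ as the basis representation of $l^*$. The only point meriting care is in the second bullet, where finite-dimensionality must be invoked to pass from surjectivity of $l^*$ to invertibility of $M_l$ (surjectivity alone would not suffice in infinite dimensions), and one should keep track of the transpose in the correspondence between $f\circ l=M_l f$ and the action of $l^*$ on coordinate vectors—a bookkeeping detail that does not affect the conclusion, since $M_l$ is invertible iff $M_l^{T}$ is.
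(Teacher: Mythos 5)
Your proposal is correct and follows essentially the same route as the paper: expand each $f_i\circ l$ in the basis $f_1,\dots,f_n$ (possible because $l\in\cS_{V_n}$) to obtain $M_l$, and then observe that $l^*(V_n)=V_n$ forces the images $l^*(f_i)$ to form a basis, whence $M_l$ is invertible. Your remark that finite-dimensionality is what upgrades surjectivity of $l^*$ to bijectivity is a slightly more explicit rendering of the paper's one-line justification, but it is the same argument.
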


\begin{proof}
As $f_i\circ l \in V_n$ it follows that there exist $m_{i,1},\ldots,m_{i,n}$ such that
\[
f\circ l = \sum\limits_{i=1}^n m_{i,k} f_k.
\]
If $l^*(V_n) = V_n$ then the $l^*(f_i)$ is also a basis of $V_n$ and thus $M$ is invertible.
\end{proof}

A consequence of this proposition is that one can derive the fractels of a vector of basis functions of $V_n$ from the affine fractel semigroup.

\subsection{Fractels of generators of function spaces}\label{fractels-of-generators-of-function-spaces}

In this section, we will consider linear fractels for vectors $f$ of real functions defined on $[0,1]$. The components $f_j$ of this vector span a linear space $V_n$ of real functions and, conversely, for every
linear space $V_n$ of functions one can find a vector of generating functions. 

The linear fractels $w:[0,1]\times \R^m \rightarrow [0,1]\times\R^m$ considered here are of the form 
\[
w(x,y) = (l(x), My), 
\] 
where $M\in \R^{m\times m}$ and $l(x) = \sigma x + \tau$, and either $\sigma\in(0,1]$ and $\tau\in[0,1-\sigma]$ or $\sigma\in[-1,0)$ and $\tau\in[|\sigma|,1]$, respectively. Note that in both cases $\tau\in [0,1]$ and $\sigma\neq 0$.

Recall that a function $f:[0,1] \rightarrow \R^m$ admits a fractel defined by $w(x,y)$ of this form if $w$ maps the graph $G(f) = \{(x,f(x)  \mid  x \in [0,1]\}$ onto itself, i.e.
\[ 
\{(l(x), Mf(x)) \st x \in [0,1]\} \subset G(f),
\] 
or, equivalently,
\[ 
f\circ l(x) = M f(x), \quad x \in [0,1]. 
\]

We will now collect some properties of linear fractels of vectors of
functions.

\begin{proposition}
Let $w(x,y)=(\sigma x+\tau, My)$ define a fractel of a function $f:[0,1]\rightarrow \R^m$. Then
\begin{itemize}
  \item There exists an $x^*\in [0,1]$ such that $l(x^*) = x^*$.
  \item Furthermore, $Mf(x^*) = f(x^*)$.
  \item Either $f(x^*)=0$ or $f(x^*)$ is an eigenvector of $M$ with eigenvalue 1.
  \item If $f(x^*)\neq 0$ then there exists a $c\in \R^m$ such that $u_0(x):= c^T f(x)$ satisfies
  \[
  u_0(\sigma x + \tau) = u_0(x), \quad x \in [0,1].
  \]
  \item If the function $f$ is continuous and $f(x^*)\neq 0$ then $u_0(x)$ is constant with value $u_0(x)=u_0(x^*)$.
\end{itemize}
\end{proposition}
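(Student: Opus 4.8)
The plan is to reduce every assertion to the single self-referential identity supplied by Proposition \ref{Lemma1}: the pair $w=(l,F)$ with $l(x)=\sigma x+\tau$ and $F(x,y)=My$ is a fractel for $f$ exactly when
\[
f(\sigma x + \tau) = M f(x), \qquad x\in[0,1].
\]
All five bullets follow from this equation together with the fact that, by our standing nontriviality convention and Proposition \ref{prop2.4}, $|\sigma|<1$, so that $l$ is a contraction of $[0,1]$.

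For the first bullet I would solve $\sigma x^*+\tau=x^*$ explicitly, obtaining $x^*=\tau/(1-\sigma)$, which is legitimate since $|\sigma|<1$ forces $\sigma\neq 1$; one then checks $x^*\in[0,1]$ directly from the admissibility constraints on $(\sigma,\tau)$ in each sign regime. (A cleaner alternative is to apply the intermediate value theorem to $g(x):=l(x)-x$, noting $g(0)=\tau\geq 0$ and $g(1)=\sigma+\tau-1\leq 0$ because $l([0,1])\subset[0,1]$.) The second and third bullets are then immediate: evaluating the self-referential identity at $x=x^*$ and using $l(x^*)=x^*$ gives $f(x^*)=Mf(x^*)$, so $f(x^*)$ is fixed by $M$, whence it is either the zero vector or an eigenvector of $M$ for the eigenvalue $1$.

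The fourth bullet is the one step that needs a small idea. Assuming $f(x^*)\neq 0$, the third bullet shows that $1$ is an eigenvalue of $M$, hence $\det(M-I)=\det(M^T-I)=0$, so $M^T$ also has eigenvalue $1$ and there is a nonzero $c\in\R^m$ with $M^Tc=c$, equivalently $c^TM=c^T$. Setting $u_0:=c^Tf$ and applying the self-referential identity then gives $u_0(\sigma x+\tau)=c^TMf(x)=c^Tf(x)=u_0(x)$, which is the asserted invariance. The conceptual point here is to recognize that one needs a \emph{left} eigenvector of $M$ for eigenvalue $1$, not the right eigenvector $f(x^*)$ already in hand, and that its existence is automatic because $M$ and $M^T$ share a characteristic polynomial.

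Finally, for the fifth bullet I would exploit the invariance $u_0\circ l=u_0$ together with $|\sigma|<1$. Iterating the identity yields $u_0(x)=u_0(l^{\circ n}(x))$ for every $n$, and since $l$ is a contraction with fixed point $x^*$ we have $l^{\circ n}(x)\to x^*$ for all $x\in[0,1]$. Continuity of $f$ makes $u_0$ continuous, so passing to the limit gives $u_0(x)=u_0(x^*)$, i.e. $u_0$ is constant. I expect the only genuine obstacle to be the bookkeeping in the first bullet, namely verifying $x^*\in[0,1]$ across both sign regimes for $\sigma$; the remaining steps are short once the self-referential identity and the left-eigenvector observation are in place.
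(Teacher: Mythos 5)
Your proof is correct and follows essentially the same route as the paper's: locate the fixed point $x^*$ of $l$ (the paper cites Brouwer, you solve $x^*=\tau/(1-\sigma)$ or use the intermediate value theorem, which amounts to the same thing in one dimension), evaluate the self-referential identity $f(\sigma x+\tau)=Mf(x)$ at $x^*$, take a left eigenvector of $M$ for the eigenvalue $1$ for the fourth bullet, and iterate $l$ toward $x^*$ using continuity for the fifth. Your only addition is to justify explicitly why a real left eigenvector exists (via the shared characteristic polynomial of $M$ and $M^{T}$), a point the paper asserts without comment.
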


\begin{proof}
As $w$ defines a fractel one has $l([0,1]) \subset [0,1]$ and by the Brouwer fixpoint theorem there exists a fixpoint $x^*$. 

As $f$ admits the fractel defined by $w$ one has $ Mf(x^*) = f(l(x^*))$ from which the second claim follows. 

For the fourth claim choose $c^T$ to be a real left eigenvector of $M$ for the eigenvalue 1 and apply the fractel to $f$ to get $c^T f(x) = c^T M f(x) = c^T f(l(x))$.

Finally, consider for any $x$ the sequence given by $x_0 = x$ and $x_{k+1} = \sigma x_k + \tau$. One has $u_0(x_k) = u_0(x)$ and, as the sequence converges to the fixpoint $x^*$ one thus gets by continuity of $f$
that $u_0(x) = u_0(x^*)$.
\end{proof}

The components $f_j$ of the vector $f$ define a vector space $V$ of functions $u:[0,1]\rightarrow \R$ by
\[ 
V = \{ u(x) = c^T f(x)  \mid c \in \R^m \}. 
\] 
A common assumption made for function spaces is that they contain constant functions. This property of $V$ follows from continuity of the elements of $V$ and the fact that it admits a fractel.

\begin{corollary}
Let $V$ be an $m$-dimensional space of continuous functions which are such that for each $x\in[0,1]$ there exists a $u\in V$ such that $u(x)\neq 0$. If $V$ admits a fractel $w(x,y)=(\sigma x + \tau, My)$ then $V$ contains the constant functions.
\end{corollary}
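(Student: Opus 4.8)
The plan is to exploit the preceding Proposition, which already does the heavy lifting: once we produce a nonzero vector $c\in\R^m$ whose associated scalar function $u_0(x)=c^T f(x)$ satisfies $u_0(\sigma x+\tau)=u_0(x)$, continuity forces $u_0$ to be constant, and hence that constant function lies in $V$. So the whole task reduces to guaranteeing that the eigenvalue-$1$ left eigenvector $c^T$ supplied by the Proposition yields a function $u_0$ that is \emph{not identically zero}; otherwise we would only conclude that the zero function (trivially constant) is in $V$, which is vacuous.

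First I would apply the Proposition to obtain the fixed point $x^*\in[0,1]$ with $l(x^*)=x^*$, together with the relation $Mf(x^*)=f(x^*)$. The key extra hypothesis here, absent from the Proposition, is that for \emph{each} $x\in[0,1]$ there is some $u\in V$ with $u(x)\neq 0$. Applying this at $x=x^*$ produces a $c\in\R^m$ with $c^T f(x^*)\neq 0$, which in particular shows $f(x^*)\neq 0$. Thus we are in the nontrivial branch of the Proposition: $f(x^*)$ is an eigenvector of $M$ with eigenvalue $1$, and there exists $c$ (a left eigenvector of $M$ for eigenvalue $1$) with $u_0(x):=c^T f(x)$ satisfying $u_0(\sigma x+\tau)=u_0(x)$ on $[0,1]$ and, by continuity, $u_0\equiv u_0(x^*)$.

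Next I must verify that this constant value $u_0(x^*)$ can be taken nonzero, so that $u_0$ is a genuine nonzero constant function. The subtle point is that the left eigenvector $c$ produced abstractly need not a priori satisfy $c^T f(x^*)\neq 0$; the Proposition only asserts the invariance $u_0(\sigma x+\tau)=u_0(x)$. Here I would argue as follows: the hypothesis gives some $\tilde c$ with $\tilde c^T f(x^*)\neq 0$, and since $Mf(x^*)=f(x^*)$, the vector $f(x^*)$ is a right eigenvector of $M$ for eigenvalue $1$; a standard fact about eigenvectors (the pairing between left and right eigenspaces for the same eigenvalue of a matrix being nondegenerate on the relevant generalized eigenspace) lets me choose a left eigenvector $c$ for eigenvalue $1$ with $c^T f(x^*)\neq 0$. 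Then $u_0:=c^T f$ is nonzero, constant by the argument above, and lies in $V$ by the very definition $V=\{c^T f\mid c\in\R^m\}$; scaling $u_0$ shows all constant functions are in $V$.

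The main obstacle I anticipate is precisely this last selection step: ensuring the existence of a \emph{left} eigenvector $c$ for eigenvalue $1$ that is not orthogonal to the \emph{right} eigenvector $f(x^*)$. For a general (non-diagonalizable) $M$ the left and right eigenspaces for eigenvalue $1$ can pair degenerately, so one cannot always pick $c$ within the eigenspace; the clean fix is to work with the spectral projection $P$ onto the generalized eigenspace of $M$ for eigenvalue $1$ (commuting with $M$), note $Pf(x^*)=f(x^*)$, and take $c^T$ in the range of $P^T$ realizing the functional $\tilde c^T$ on that subspace, which gives $c^T f(x^*)=\tilde c^T f(x^*)\neq 0$. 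Once this eigenvector-selection technicality is handled, the remainder is a direct invocation of the Proposition and continuity.
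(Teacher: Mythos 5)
Your overall route is the one the paper intends (the Corollary is stated without proof): apply the preceding Proposition at the fixed point $x^*$ of $l$, use the pointwise non-vanishing hypothesis at $x^*$ to conclude $f(x^*)\neq 0$, and deduce that $u_0=c^Tf$ is a constant function in $V$. You have also correctly isolated the one genuine difficulty, which the paper glosses over: the left eigenvector $c^T$ of $M$ for eigenvalue $1$ may satisfy $c^Tf(x^*)=0$, in which case the argument only yields the zero function. However, your proposed repair fails. Taking $c^T=\tilde c^{\,T}P$ with $P$ the spectral projection onto the generalized eigenspace for eigenvalue $1$ produces a \emph{generalized} left eigenvector, not a left eigenvector, so $c^TM=c^T$ need not hold; for example, for $M=\left(\begin{smallmatrix}1&1\\0&1\end{smallmatrix}\right)$ one has $P=I$, the right eigenvector is $(1,0)^T$, every genuine left eigenvector is a multiple of $(0,1)$, and no vector in the range of $P^T$ is both a left eigenvector and non-orthogonal to $(1,0)^T$. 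Since the components of $f$ are linearly independent ($V$ is $m$-dimensional), the identity $u_0(\sigma x+\tau)=u_0(x)$ for all $x$ is \emph{equivalent} to $c^T(M-I)=0$; with your choice of $c$ this invariance is lost, and the continuity/iteration argument that makes $u_0$ constant no longer applies.

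The gap is closable, but by showing that the degenerate pairing cannot occur under the Corollary's hypotheses rather than by circumventing it. One way: from $f\circ l=Mf$ one gets $M^kf(x)=f(l^k(x))\to f(x^*)$ for every $x$ (note $|\sigma|<1$ because the fractel is nontrivial, so $l^k(x)\to x^*$), and since $\{f(x)\st x\in[0,1]\}$ spans $\R^m$ by linear independence of the components, $M^k$ converges to a matrix $M^\infty$ with $M^\infty f(x)=f(x^*)$ for all $x$. Then $M^\infty$ is a rank-one idempotent, $M^\infty=f(x^*)\,c^T$ with $c^Tf(x^*)=1$, and $M^\infty M=M^\infty$ forces $c^TM=c^T$. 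This $c$ is a genuine left eigenvector with the required non-degeneracy, so $u_0=c^Tf$ is constant equal to $1$ and $V$ contains all constants. (Alternatively, one can show directly that a Jordan block of $M$ at eigenvalue $1$ meeting $f(x^*)$ degenerately would force a linear dependence among the $f_j$, contradicting $\dim V=m$.)
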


The condition that not all $u$ are zero at the same point $x_0$ is essential for the solution of interpolation problems: A real function $u(x)$ with $u(x_0)\neq 0$ cannot be interpolated in $V_n$ if $x_0$ is an interpolation point.

One then gets directly for the case $m=1$:

\begin{corollary}
The only functions $f:[0,1]\rightarrow \R$ with $f(x) \neq 0$ for all $x$ which admit a fractel defined by $w(x,y)=(\sigma x+\tau, \mu y)$ are functions which admit the symmetry
\[
f(\sigma x + \tau) = f(x)
\]
and $\mu=1$. The only continuous real functions which admit a fractel are the constants.
\end{corollary}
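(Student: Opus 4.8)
The plan is to read off both assertions from the preceding Proposition and Corollary by specializing to $m = 1$, so that the matrix $M$ becomes a scalar $\mu$ and the fractel relation (via Proposition \ref{Lemma1}) reads $f(\sigma x + \tau) = \mu f(x)$ for all $x \in [0,1]$. Before anything else I would record that, since $w$ is a nontrivial linear fractel, Proposition \ref{prop2.4} forces $|\sigma| < 1$; hence $l(x) = \sigma x + \tau$ is a contraction of $[0,1]$ into itself, with unique fixed point $x^* = \tau/(1-\sigma) \in [0,1]$.

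For the first assertion I would take a function $f$ with $f(x) \neq 0$ for every $x$ and evaluate the fractel relation at $x = x^*$. Using $l(x^*) = x^*$ this gives $f(x^*) = \mu f(x^*)$, and since $f(x^*) \neq 0$ by hypothesis we may cancel it to conclude $\mu = 1$. Substituting $\mu = 1$ back into $f(\sigma x + \tau) = \mu f(x)$ yields the claimed invariance $f(\sigma x + \tau) = f(x)$ for all $x \in [0,1]$; this is exactly the specialization of the third and fourth bullets of the preceding Proposition to $m = 1$.

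For the second assertion I would add the continuity hypothesis and iterate. Fixing $x \in [0,1]$, set $x_0 := x$ and $x_{k+1} := l(x_k)$. The invariance just established gives $f(x_k) = f(x_{k-1}) = \cdots = f(x)$ for every $k$, while the strict contractivity $|\sigma| < 1$ guarantees $x_k \to x^*$. Passing to the limit and using continuity of $f$ then forces $f(x) = f(x^*)$; as $x$ was arbitrary, $f$ is the constant $f(x^*)$. This reproduces the last bullet of the preceding Proposition and the constant-function conclusion of the preceding Corollary in the scalar case.

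The argument is essentially a bookkeeping specialization, so there is no deep obstacle; the one genuine point to watch is that the nonvanishing hypothesis is indispensable. Without it the step ``$f(x^*) = \mu f(x^*) \Rightarrow \mu = 1$'' breaks down at a zero of $f$, and indeed the continuous nonconstant function $f(x) = x - x^*$ admits the fractel $(\sigma x + \tau, \sigma y)$ (as one checks directly from $\sigma x + \tau - x^* = \sigma(x - x^*)$). Consequently the clause ``the only continuous real functions which admit a fractel are the constants'' must be read under the standing assumption $f(x) \neq 0$. Invoking Proposition \ref{prop2.4} to secure $|\sigma| < 1$ strictly, rather than merely $|\sigma| \le 1$, is the ingredient I would be careful to state explicitly, since it is precisely what makes the iteration converge to $x^*$.
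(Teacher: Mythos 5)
Your proof is correct and matches the paper's own (implicit) derivation: the corollary is introduced there with ``One then gets directly for the case $m=1$'' from the preceding proposition, and your argument is exactly that specialization --- existence of the fixed point $x^*$ of the contraction $l$ (you justify $|\sigma|<1$ via Proposition \ref{prop2.4}, the paper via Brouwer plus nontriviality), cancellation of $f(x^*)\neq 0$ to obtain $\mu=1$, and then iteration of $l$ together with continuity to force constancy. Your closing caveat that the final clause must be read under the standing nonvanishing hypothesis --- witnessed by the continuous nonconstant $f(x)=x-x^*$ with fractel $(\sigma x+\tau,\sigma y)$ --- correctly flags an imprecision in the statement itself rather than a gap in your argument.
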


\begin{proposition}
If $e^T f(x) = 1$ for $e=(1,1,\ldots,1)^T$ then the matrix $M$ of the fractel of $f(x)$ is stochastic if its elements are $m_{i,j} \geq 0$.
\end{proposition}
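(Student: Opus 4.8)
The plan is to read the fractel condition as the functional identity $f(l(x)) = M f(x)$ for every $x\in[0,1]$, where $l(x)=\sigma x+\tau$, and then to test it against the constant left vector $e^T$. First I would left-multiply by $e^T$ to obtain $e^T f(l(x)) = e^T M f(x)$. Since $l\in\cS$ we have $l([0,1])\subset[0,1]$, so $l(x)$ lies in the domain on which the hypothesis $e^T f\equiv 1$ holds; hence the left-hand side equals $1$, giving $e^T M f(x) = 1$ for all $x$.

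Next I would combine this with $e^T f(x)=1$, which holds by assumption at the same points. Subtracting the two identities yields $(e^T M - e^T) f(x) = 0$ for all $x\in[0,1]$. Writing $c^T := e^T M - e^T = (c_1,\ldots,c_m)$, this says $\sum_{j=1}^m c_j f_j(x) = 0$ identically on $[0,1]$, i.e.\ a linear dependence relation among the component functions $f_1,\ldots,f_m$.

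The decisive step, and the one I expect to be the main obstacle, is concluding that $c = 0$. This does not follow formally: the identity $(e^T M - e^T)f(x)=0$ only constrains $M$ on the linear span of the image $\{f(x)\st x\in[0,1]\}$. To upgrade it to the vector identity $e^T M = e^T$ one must use that $f$ is a vector of generators of the $m$-dimensional space $V$, so that its components $f_1,\ldots,f_m$ are linearly independent. Under that (implicit) hypothesis the relation forces every $c_j=0$, whence $e^T M = e^T$; that is, each column of $M$ sums to $1$.

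Finally, the nonnegativity $m_{i,j}\geq 0$ enters only in the last bookkeeping step: a matrix with nonnegative entries whose columns all sum to $1$ is, by definition, (column-)stochastic. I would stress that nonnegativity plays no role in deriving the column-sum condition; it is invoked purely to match the definition of a stochastic matrix, which is exactly why it appears in the statement as a hypothesis rather than as part of the active argument.
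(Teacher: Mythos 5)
Your argument is correct. Note that the paper states this proposition with no proof at all, so there is nothing to compare against; your write-up supplies the missing argument, and it is surely the intended one: left-multiply the fractel identity $f(l(x))=Mf(x)$ by $e^T$, use $l([0,1])\subset[0,1]$ to evaluate the left-hand side as $1$, and conclude $\bigl(e^TM-e^T\bigr)f(x)=0$ on $[0,1]$. You are also right to flag the only delicate point, namely that passing from this to $e^TM=e^T$ requires the components $f_1,\ldots,f_m$ to be linearly independent; this is covered by the standing hypotheses of the subsection, where $f$ is a vector of $m$ generators of an $m$-dimensional space $V$, and it is consistent with the worked examples (degree-one polynomials, B-splines), whose matrices are indeed column-stochastic. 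Your closing observation that the hypothesis $m_{i,j}\geq 0$ enters only to match the definition of a stochastic matrix, and plays no role in deriving the column-sum condition, is likewise accurate.
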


\subsection{Fractels for a polynomial basis}\label{fractels-for-a-polynomial-basis}

Here we consider the space $P_k$ of polynomials of degree at most $k$. In this case the fractel semigroup is the full fractel semigroup of $[0,1]$. Our main result is a characterization of all fractels of all vectors of basis functions of $P_k$.

\begin{proposition}
The fractel semigroup of the space of polynomials $P_k$ of degree at most $k$ is
\[
\cS_{P_k} = \cS.
\]
\end{proposition}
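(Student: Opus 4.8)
The plan is to prove the two inclusions separately. By the very definition of the fractel semigroup of a function space we have $\cS_{P_k} \subseteq \cS$, so the entire content of the statement lies in establishing the reverse inclusion $\cS \subseteq \cS_{P_k}$. In words, I must show that \emph{every} affine map leaving $[0,1]$ invariant pulls polynomials of degree at most $k$ back to polynomials of degree at most $k$.

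To do this, I would fix an arbitrary $l \in \cS$, written $l(x) = \sigma x + \tau$ with $l([0,1]) \subset [0,1]$, and an arbitrary $u \in P_k$, and verify directly that $u \circ l \in P_k$. Writing $u(x) = \sum_{j=0}^k a_j x^j$, the composition is $u(l(x)) = \sum_{j=0}^k a_j (\sigma x + \tau)^j$. Expanding each power $(\sigma x + \tau)^j$ by the binomial theorem gives a polynomial in $x$ of degree exactly $j$ when $\sigma \neq 0$, hence of degree at most $j \le k$; summing over $j$ shows that $u \circ l$ is a polynomial of degree at most $k$, i.e. $u \circ l \in P_k$. Since $u$ was arbitrary this gives $l \in \cS_{P_k}$, and since $l$ was arbitrary it gives $\cS \subseteq \cS_{P_k}$. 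Combining the two inclusions yields $\cS_{P_k} = \cS$.

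There is no genuine obstacle here: the proof reduces to the elementary observation that composing a polynomial of degree at most $k$ with an affine (degree-one) map never raises the degree. The only point deserving a moment's care is the degenerate case $\sigma = 0$, in which $l$ is the constant map $x \mapsto \tau$ and $u \circ l$ collapses to the constant $u(\tau)$; this is still an element of $P_k$, so even such degenerate maps, should they be admitted to $\cS$, are not excluded from $\cS_{P_k}$. This confirms the earlier assertion in the text that for $V_n$ the space of polynomials of degree at most $n-1$ one has $\cS_{V_n} = \cS$.
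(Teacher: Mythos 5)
Your proof is correct and follows essentially the same route as the paper's: both reduce the statement to the elementary fact that precomposition with an affine map $l(x)=\sigma x+\tau$ sends a polynomial of degree at most $k$ to a polynomial of degree at most $k$, and then invoke the definition of $\cS_{P_k}$. Your extra remark about the degenerate case $\sigma=0$ is a reasonable bit of care the paper sidesteps by tacitly assuming $\sigma\neq 0$ (as it does explicitly for the linear fractels considered later).
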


\begin{proof}
As $l(x)=\sigma x + \tau$ it follows that for any polynomial $p$ the composition $p\circ l$ is a polynomial of the same degree if $\sigma\neq 0$. The claim follows then directly from the definition of the fractel semigroup.
\end{proof}

\begin{proposition}
Any fractel of a vector $f$ of basis polynomials of $P_k$ is of the form
\[
w(x,y) = (l(x), T M_l T^{-1} y)
\]
where
  \begin{itemize}
    \item $l\in \cS$,
    \item the matrix $T\in GL(k+1,\R)$ satisfies $f(x) = T (1,x,\ldots,x^k)^T$,
    \item the matrix $M_l$ has the components
     \[
     m_{s,t} = \begin{cases} \binom{s}{t} \tau^{s-t}\sigma^t,\quad & 0 \leq t \leq s \leq k; \\ 0, & \text{else},	\end{cases}
     \]
      where $l(x) = \sigma x + \tau$.
  \end{itemize}
\end{proposition}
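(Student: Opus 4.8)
The plan is to reduce the arbitrary basis $f$ to the monomial basis via the change-of-coordinates matrix $T$, to compute the monomial transition matrix $M_l$ explicitly, and then to invoke a Vandermonde argument showing that the matrix of the fractel is uniquely determined. Throughout I work, as in this section, with linear fractels $w(x,y) = (l(x), My)$; by Proposition \ref{Lemma1}, such a $w$ is a fractel for $f$ if and only if $f(l(x)) = M f(x)$ for all $x \in [0,1]$. Writing $q(x) := (1, x, \ldots, x^k)^T$, there is a unique matrix $T$ with $f(x) = T q(x)$, obtained by expressing each component $f_j$ in the monomials; since $f_0, \ldots, f_k$ is a basis of $P_k$, the matrix $T$ is the associated change of basis and hence lies in $\GL(k+1, \R)$. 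That $l \in \cS$ is immediate: $l$ is affine by hypothesis with $\sigma \neq 0$, and Propositions \ref{subset} and \ref{prop2.4} force $l([0,1]) \subsetneq [0,1]$.

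Next I would establish existence together with the explicit form of $M_l$. Expanding $l(x)^s = (\sigma x + \tau)^s$ by the binomial theorem gives $l(x)^s = \sum_{t=0}^s \binom{s}{t} \tau^{s-t} \sigma^t x^t$, so the $s$-th component of $q(l(x))$ is exactly $\sum_{t} m_{s,t} x^t$ with $m_{s,t}$ as in the statement; equivalently $q(l(x)) = M_l\, q(x)$, where $M_l$ is lower triangular with the stated entries. Consequently $f(l(x)) = T q(l(x)) = T M_l\, q(x) = T M_l T^{-1} f(x)$, which by Proposition \ref{Lemma1} confirms that $(l, T M_l T^{-1})$ is a fractel for $f$.

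It remains to prove uniqueness, which is where the substance lies. Let $(l, M)$ be any linear fractel of $f$. Combining $f(l(x)) = M f(x)$ with the identity just derived yields $M f(x) = T M_l T^{-1} f(x)$ for every $x \in [0,1]$. The key observation is that the vectors $\{f(x) \st x \in [0,1]\}$ span $\R^{k+1}$: choosing $k+1$ distinct nodes $x_0, \ldots, x_k$, the vectors $q(x_0), \ldots, q(x_k)$ are the rows of a Vandermonde matrix and hence linearly independent, and applying the invertible $T$ preserves independence, so $f(x_0), \ldots, f(x_k)$ form a basis of $\R^{k+1}$. Two matrices agreeing on a spanning set coincide, whence $M = T M_l T^{-1}$.

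The main obstacle is precisely this last spanning/Vandermonde step, since it is what excludes any competing matrix and upgrades the construction to a genuine characterization; the binomial computation of $M_l$ is routine, and the reduction through $T$ is bookkeeping. A secondary point worth flagging at the outset is the standing restriction to fractels linear in $y$: away from the graph $G(f)$ the second component $F$ is unconstrained by the fractel condition, so without this restriction the conclusion could not hold verbatim.
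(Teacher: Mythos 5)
Your proposal follows essentially the same route as the paper: reduce to the monomial vector via the change-of-basis matrix $T$, compute $M_l$ from the binomial theorem, and for uniqueness deduce $M = TM_lT^{-1}$ from $Mf(x) = TM_lT^{-1}f(x)$ holding for all $x$. The only difference is that you spell out the Vandermonde/spanning argument that the paper compresses into the phrase ``as $f$ is a vector of basis functions,'' which is a welcome but not substantively different elaboration.
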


\begin{proof}
Let $f$ be a vector of basis functions of $P_k$. Then there exists a matrix $T\in GL(k+1,\R)$ such that $f(x)=Tg(x)$ where $g(x)$ is a vector with components $g_s(x) = x^s$, for $s=0,\ldots,k$. As
\[
g_s(\sigma x + \tau) = \sum\limits_{t=0}^s \binom{s}{t} \tau^{s-t} \sigma^t x^t
\]
one obtainss $g(\sigma x + \tau) = M_l g(x)$ and thus
\begin{align*}
    f(\sigma x + \tau) &= T g(\sigma x + \tau) \\
                       &= T M_l g(x) \\
                       &= T M_l T^{-1} f(x).
\end{align*}
It follows now that $w(x,y)=(l(x), TM_l T^{-1}y)$ is a fractel of $f(x)$.

Conversely, let $w(x,y) = (l(x), My)$ be any fractel of $f(x)$. Then $ f(\sigma x + \tau) = M f(x)$ and thus $M f(x) = T M_l T^{-1} f(x)$, for all $x\in[0,1]$. As $f$ is a vector of basis functions it follows that $M= TM_l T^{-1}$.
\end{proof}

We will now illustrate the previous results by some examples.

\begin{example}[Monomial Basis: $f(x) = (1,x,x^2,x^3)^T$]
Here one has $T=I$ and $w(x,y) = (\sigma x + \tau, M_l y)$ with
\[
M_l = \begin{bmatrix} 1 & & & \\ \tau & \sigma & & \\ \tau^2 & 2\sigma \tau & \sigma^2 & \\
                                     \tau^3 & 3 \sigma \tau^2 & 3 \sigma^2 \tau & \sigma^3 \end{bmatrix}.
\]
For practical computations are important $l_1(x)= \frac{x}{2}$ and $l_2(x) = \frac{x+1}{2}$. where
\[ 
M_{l_1} = \begin{bmatrix}1 & & & \\[4pt] & \frac{1}{2} & & \\[4pt] & & \frac{1}{4} & \\[4pt] & & & \frac{1}{8} \end{bmatrix}, 
\quad 
M_{l_2} = \begin{bmatrix}1 & & & \\[4pt] \frac{1}{2} & \frac{1}{2} & & \\[4pt] 
  \frac{1}{4} & \frac{1}{2} & \frac{1}{4} & \\[4pt] \frac{1}{8} & \frac{3}{8} & \frac{3}{8} & \frac{1}{8} \end{bmatrix}. \]
\end{example}

\begin{example}[Degree One Polynomials and $f(x) = (1-x, x)^T$]
In this case one has for the transformation matrix 
\[
T = \begin{bmatrix} 1 & -1 \\ 0 & 1 \end{bmatrix}
\]
and for the matrix $M=TM_l T^{-1}$:
\[
M = \begin{bmatrix} 1 - \tau & 1 - \tau -\sigma \\ \tau & \tau + \sigma \end{bmatrix}.
\]
Note that as $e^T f(x) = 1$ the matrices $M$ are stochastic. 

Using the same affine functions $l_1$ and $l_2$ as in the previous example, one obtains for $M_i = T M_{l_i} T^{-1}$:
\[
M_1 = \begin{bmatrix} 1 & \frac{1}{2} \\[4pt] 0 & \frac{1}{2} \end{bmatrix}, \quad
M_2 = \begin{bmatrix}  \frac{1}{2}  & 0 \\[4pt] \frac{1}{2} & 1 \end{bmatrix}.
\]
\end{example}

\begin{example}[3rd Degree Chebyshev Polynomials: $f(x)=(1,x,2x^2-1,4x^3-3x)^T$]
Here one can compute the transformation matrix $T$ to be
\[
T = \begin{bmatrix}1&0 & 0& 0 \\0 & 1 &0 &0 \\ -1 & 0 & 2 & 0 \\ 0 & -3 & 0 & 4 \end{bmatrix}.
\]
Here we only consider the two special cases for the $l_i$ used previously and $M_i= T M_{l_i} T^{-1}$:
\[
M_1 = \begin{bmatrix} 1 & 0 & 0 & 0 \\[4pt] 0 & \frac{1}{2} & 0 & 0 \\[4pt] -\frac{3}{4} & 0 & \frac{1}{4} & 0 \\[4pt] 0 & -\frac{9}{8} & 0 & \frac{1}{8} \end{bmatrix}, 
\quad 
M_2 = \begin{bmatrix} 1 & 0 & 0 & 0 \\[4pt] \frac{1}{2} & \frac{1}{2} & 0 & 0 \\[4pt] -\frac{1}{4} & 1 & \frac{1}{4} & 0 \\[4pt] -\frac{1}{4} & \frac{3}{8} & \frac{3}{4} & \frac{1}{8} \end{bmatrix}.
\]
\end{example}

\begin{example}[3rd Degree B-Spline Polynomials] 
Here the components of the basis functions are the four polynomial pieces of the B-spline shifted to $[0,1]$:
\[
f(x) = \frac{1}{6}(x^3,-3x^3+3x^2+3x+1, 3x^3 - 6x^2 + 4, (1-x)^3)^T.
\]
The transformation matrix is then 
\[
T_f = \frac{1}{6}\begin{bmatrix} 0&0&0&1\\ 1&3&3&-3\\ 4&0&-6&3\\ 1&-3&3&-1 \end{bmatrix}.
\]
As before we consider the two special cases for the $l_i$ used previously and $M_i= T M_{l_i} T^{-1}$:
\[
M_1 = \begin{bmatrix} \frac{1}{8} & 0 & 0 & 0 \\[4pt] \frac{3}{4} & \frac{1}{2} & \frac{1}{8} & 0 \\[4pt] 
   \frac{1}{8}& \frac{1}{2}&  \frac{3}{4} & \frac{1}{2} \\[4pt] 0 & 0 & \frac{1}{8}& \frac{1}{2} \end{bmatrix}, 
   \quad M_2 = \begin{bmatrix} \frac{1}{2}&\frac{1}{8}&0&0\\[4pt] 
   \frac{1}{2} & \frac{3}{4} & \frac{1}{2}&\frac{1}{8} \\[4pt] 0&\frac{1}{8}&\frac{1}{2}&\frac{3}{4}\\[4pt] 0&0&0&\frac{1}{8} 
   \end{bmatrix}.
   \]
One can see that these two matrices are again stochastic.
\end{example}
\subsection{Polynomial evaluation}
The above considerations lead to the following family of remarkable (because of their derivation via self-similarity) algorithms for evaluating polynomials. An algorithm in base $10$ is described and applied to the evaluation of polynomials on the interval $[0,10]$. 

To this end, let $p:[0,10]\rightarrow \mathbb{R}$ be a polynomial of degree $m\in \N_0$:
\[
p(x)=a_{0}+a_{1}x+...+a_{m}x^{m}\text{.}%
\]
We note that the attractor of the set of fractels%
\[
\{[0,10]\mathbb{\times R}^{m};w_{\frac{1}{10},\frac{10n}{9}},\, n=0,1,2,...,9\}
\]
is $G((1,x,...,x^{m})^{\top})$ restricted to the graph of $p$ for $x\in [0,10].$ In the following, we are omitting the expression ``restricted to the graph of $p$'' when no confusion is expected. It follows that
\begin{align}
p(d_{1}.d_{2}d_{3}d_{4}&....d_{k}00000...)   =(a_{0},a_{1},...,a_{m})\cdot
N_{d_{1}}N_{d_{2}}...N_{d_{k}}
\begin{pmatrix}1\\ 0\\ \vdots\\ 0\end{pmatrix}
\label{rootformula}\\
&  =\text{the first component of }N_{d_{k}}^{\top}N_{d_{k-1}}^{\top
}...N_{d_{1}}^{\top}(a_{0},a_{1},...,a_{m})^{\top}\text{.}\nonumber
\end{align}
The order of approximation can be increased by only applying a single linear transformation to the results of the preceding calculation. Note that the $x$ transformations are $l_{n}(x)=n+\frac{x}{10}=(1-\frac{1}{10})\frac
{n}{1-\frac{1}{10}}+\frac{x}{10}=(1-s)t+sx,$ where $t=n/(1-\frac{1}{10})$.

\begin{example}
\label{example2} Cubic polynomials, in base $10$, on the interval $[0,10]$. We
take $s=\frac{1}{10},$ so that $t=\frac{10n}{9},$ $n=0,1,...,9$.
\[
D_{s}=%
\begin{pmatrix}
1 & 0 & 0 & 0\\
0 & s & 0 & 0\\
0 & 0 & s^{2} & 0\\
0 & 0 & 0 & s^{3}%
\end{pmatrix},\quad
M_{t}=%
\begin{pmatrix}
1 & 0 & 0 & 0\\
t & 1 & 0 & 0\\
t^{2} & 2t & 1 & 0\\
t^{3} & 3t^{2} & 3t & 1
\end{pmatrix}.
\]
Hence, we define
\[
J(n):=(M_{\frac{10n}{9}}D_{\frac{1}{10}}M_{\frac{10n}{9}}^{-1})^{\top}=\allowbreak%
\begin{pmatrix}
1 & n & n^{2} & n^{3}\\
0 & \frac{1}{10} & \frac{1}{5}n & \frac{3}{10}n^{2}\\
0 & 0 & \frac{1}{100} & \frac{3}{100}n\\
0 & 0 & 0 & \frac{1}{1000}%
\end{pmatrix}.
\]
Equation (\ref{rootformula}), applied to a polynomial $p$ of degree less than or equal
to three, implies that
\begin{align*}
p(d_{1}.d_{2}d_{3}d_{4}....d_{k}00000...) =
\begin{pmatrix}
1 & 0 & 0 & 0
\end{pmatrix}
J(d_{k})J\left(  d_{k-1}\right)  ...\allowbreak J(d_{1})%
\begin{pmatrix}
a_{0} & a_{1} & a_{3} & a_{4}%
\end{pmatrix}
^{\top}%
\end{align*}
For instance, for the polynomial $p(x)=1+3x+2x^{2}+x^{3}\allowbreak$ we find,
\[
J(2)J(1)%
\begin{pmatrix}
1 & 3 & 2 & 1
\end{pmatrix}
^{\top}=%
\begin{pmatrix}
\frac{1151}{125} & \frac{303}{2500} & \frac{7}{12\,500} & \frac{1}{1000\,000}%
\end{pmatrix}
^{\top}%
\]
which gives the correct value $p(1.2)=\frac{1151}{125}$. We also have
\[
p(1.23)=%
\begin{pmatrix}
1 & 0 & 0 & 0
\end{pmatrix}
J(3)%
\begin{pmatrix}
\frac{1151}{125} & \frac{303}{2500} & \frac{7}{12\,500} & \frac{1}{1000\,000}%
\end{pmatrix}
^{\top}=\frac{9576\,667}{1000\,000},
\]
thereby illustrating that in order to increase the precision by one digit, we need only
one additional matrix multiplication. This not a surprising, but it is clear
that the work is automatically elegantly organized via the fractel approach.
\end{example}

\begin{remark}
In principle, the IFS method for evaluation of graphs of polynomials has two benefits compared to the direct application of Horner's rule.
\begin{enumerate}
\item The IFS method is self-correcting (for example, in the Chaos Game version, the accuracy of
the approximation of the graph of the polynomial over a fixed interval is
increased at each iteration) whereas Horner's Rule will tend to exaggerate
(rounding) errors made at earlier stages of the calculation. Here we are
assuming that the "eigenvalue 1" issue has been dealt with by replacing the
relevant component by a contractive map.
\item It is not clear how to efficiently increase the number of significant bits produced by Horner's Rule, without
running the same algorithm at higher precision; this is not the case with the
IFS approach, provided one keeps the last computed vector, as in Example
\ref{example2}.
\end{enumerate}
\end{remark}

\subsection{IFSs and approximations for vector valued functions $f(x)$}

Proposition \ref{Lemma1} of Section \ref{sec:1D} can be generalised to vector valued functions
and fractels used in this section. In particular, one can show that  
$$w(x,y) = ((x+\tau)/2, My + (1-M)G(x))$$
is a fractel for $f: [a,b] \rightarrow \R^d$ with $\tau\in[a,b]$ if
$$G(x) = (I-M)^{-1}(f((x+\tau)/2) - M f(x)).$$
One can rewrite this formula as
$$G(x) = f(x) + (I-M)^{-1}(f((x+\tau)/2) - f(x))$$
and by the mean value theorem one gets
$$G(x) = f(x) + \frac{\tau-x}{2} (I-M)^{-1} f^\prime(\xi)$$
for some $\xi\in[a,b]$. A simple approximation of the second term gives
$$G(x) \approx f(x) + \frac{\tau-x}{2} (I-M)^{-1} f^\prime((a+b)/2).$$
This can be used for example with a Taylor approximation of $f(x)$ in the formula
for $G(x)$ to get a numerical approximation. This result can be utilised as before to 
find a local IFS and corresponding approximations for the function $f(x)$.

One may use the resulting vector-valued approximations to derive higher-order approximations
for real functions $u(x)$ by choosing $f(x)$ to be
\begin{itemize}
  \item a vector of shifted values of $u(x)$ like $f(x) = (u(x), u(x+h), u(x+2h))$;
  \item polynomial approximations of $u(x)$ like $f(x) = (p_1(x), p_2(x), u(x))$, for example
       polynomial interpolants of degree one and two;
  \item (local) basis functions with $u(x)$ like $f(x) = (b_1(x), b_2(x), u(x))$, for example, wavelets;
  \item functions and their derivatives, e.g., $f(x) = (u(x), u^\prime(x))$.
\end{itemize}

\section{Conclusion}

Fractals are defined as fixed points of iterated function systems (IFS). Here we consider 
fractal functions, i.e, functions which have a graph that is a fixed point of an IFS. It 
turns out that this property is very general and, in fact, one could say that more or less 
every function which has a domain that is a fixed point of an IFS is a fractal function. However, if the
components of the IFS, the fractels, are chosen carefully, one may obtain powerful classes
of approximations by approximating the fractels. In particular, the choice of the fractels
should be based on (approximate) symmetries or self-referentiability of the function.

The underlying algebra of the fractels which relates to the local, approximate symmetry (semi-) 
groups of a function is discussed in the earlier sections and the application to real and
vector valued functions is considered in the last two sections.

Here, only functions of one variable have been discussed. However, the ideas presented 
can be generalized to functions of multiple variables. This is work in progress.

\section*{Acknowledgement}
The third author was partially supported by DFG grant MA5801/2-1.
The work of the second author partially supported  by the Technische Universit\"at M\"unchen 
-- Institute for Advanced Study, funded by the German Excellence Initiative.

{\vspace{.1in}
\noindent Michael F. Barnsley\\
Mathematical Sciences Institute\\
The Australian National University\\
Canberra, ACT, Australia\\
michael.barnsley@anu.edu.au

\vspace{.1in}\noindent Markus Hegland\\
Mathematical Sciences Institute\\
The Australian National University\\
Canberra, ACT, Australia\\
markus.hegland@anu.edu.au \\
and \\
Institute for Advanced Study\\
Technische Universität München\\
Lichtenbergstrasse 2a\\
D-85748 Garching, Germany.

\vspace{.1in}\noindent Peter Massopust\\
Center of Mathematics, Research Unit M15\\
Technical University of Munich (TUM)\\
Boltzmannstr. 3\\
85747 Garching, Germany\\
massopust@ma.tum.de
}

\end{document}